\documentclass[11pt]{amsart}
\usepackage{amsmath, amssymb, amsthm,  epsfig}
\theoremstyle{plain}
\newtheorem{thrm}{Theorem}[section]
\newtheorem*{thrm*}{Theorem}
\newtheorem{lemma}[thrm]{Lemma}
\newtheorem{prop}[thrm]{Proposition}
\newtheorem{cor}[thrm]{Corollary}
\theoremstyle{definition}
\newtheorem{dfn}[thrm]{Definition}
\theoremstyle{remark}
\newtheorem{rmrk}[thrm]{Remark}
\theoremstyle{example}

\numberwithin{equation}{section}
\usepackage{color}

\setlength{\oddsidemargin}{0.3in}
\setlength{\evensidemargin}{0.3in}
\setlength{\textwidth}{6.5in}
\setlength{\rightmargin}{0.7in}
\setlength{\leftmargin}{-0.5in}

\begin{document}

\newcommand{\tx}{\tilde x}
\newcommand{\R}{\mathbb R}
\newcommand{\N}{\mathbb N}
\newcommand{\C}{\mathbb C}
\newcommand{\lie}{\mathcal G}
\newcommand{\hN}{\mathcal N}
\newcommand{\D}{\mathcal D}
\newcommand{\A}{\mathcal A}
\newcommand{\B}{\mathcal B}
\newcommand{\sL}{\mathcal L}
\newcommand{\sLi}{\mathcal L_{\infty}}

\newcommand{\G}{\Gamma}
\newcommand{\x}{\xi}

\newcommand{\eps}{\epsilon}
\newcommand{\al}{\alpha}
\newcommand{\be}{\beta}
\newcommand{\p}{\partial}  
\newcommand{\lig}{\mathfrak}

\def\dist{\mathop{\varrho}\nolimits}

\newcommand{\BCH}{\operatorname{BCH}\nolimits}
\newcommand{\Lip}{\operatorname{Lip}\nolimits}
\newcommand{\Hol}{C}                             
\newcommand{\lip}{\operatorname{lip}\nolimits}
\newcommand{\capQ}{\operatorname{Cap}\nolimits_Q}
\newcommand{\pCap}{\operatorname{Cap}\nolimits_p}
\newcommand{\Om}{\Omega}
\newcommand{\om}{\omega}
\newcommand{\half}{\frac{1}{2}}
\newcommand{\e}{\epsilon}
\newcommand{\vn}{\vec{n}}
\newcommand{\X}{\Xi}
\newcommand{\tLip}{\tilde  Lip}

\newcommand{\Span}{\operatorname{span}}

\newcommand{\ad}{\operatorname{ad}}
\newcommand{\Hm}{\mathbb H^m}
\newcommand{\Hn}{\mathbb H^n}
\newcommand{\Hone}{\mathbb H^1}
\newcommand{\Lie}{\mathfrak}
\newcommand{\Layer}{V}
\newcommand{\hgrad}{\nabla_{\!H}}
\newcommand{\im}{\textbf{i}}
\newcommand{\nz}{\nabla_0}
\newcommand{\s}{\sigma}
\newcommand{\se}{\sigma_\e}

\newcommand{\ued}{u^{\e,\delta}}
\newcommand{\ueds}{u^{\e,\delta,\sigma}}
\newcommand{\tnabla}{\tilde{\nabla}}

\newcommand{\bx}{\bar x}
\newcommand{\by}{\bar y}
\newcommand{\bt}{\bar t}
\newcommand{\bs}{\bar s}
\newcommand{\bz}{\bar z}
\newcommand{\btau}{\bar \tau}
\newcommand{\bY}{\bar Y^{\e}}
\newcommand{\bd}{\bar{d}}

\newcommand{\LC}{\mbox{\boldmath $\nabla$}}
\newcommand{\Ne}{\mbox{\boldmath $n^\e$}}
\newcommand{\nuo}{\mbox{\boldmath $n^0$}}
\newcommand{\nuu}{\mbox{\boldmath $n^1$}}
\newcommand{\nue}{\mbox{\boldmath $n^\e$}}
\newcommand{\nuek}{\mbox{\boldmath $n^{\e_k}$}}
\newcommand{\dse}{\nabla^{H\Su, \e}}
\newcommand{\dso}{\nabla^{H\Su, 0}}
\newcommand{\tX}{\tilde X}

\newcommand{\Xie}{X^\epsilon_i}
\newcommand{\Xje}{X^\epsilon_j}
\newcommand{\Su}{\mathcal S}
\newcommand{\F}{\mathcal F}

\title[Harnack Inequality]{A  subelliptic analogue of Aronson-Serrin's Harnack inequality}

\author{Luca Capogna}\address{Institute for Mathematics and its Applications, University of Minnesota, Minneapolis, MN 55455 \\ Department of Mathematical Sciences,
University of Arkansas, Fayetteville, AR 72701}\email{lcapogna@uark.edu}
%
\author{Giovanna Citti}\address{Dipartimento di Matematica, Piazza Porta S. Donato 5,
40126 Bologna, Italy}\email{citti@dm.unibo.it}
\author{Garrett Rea}\address{
Mathematics Department,
Missouri Southern State University,
Joplin, MO 648010 }\email{rea-garrett@mssu.edu}
\keywords{Harnack inequality, Quasilinear parabolic PDE, homogeneous spaces, Carnot-Caratheodory distance\\
LC is partially funded by NSF award  DMS 1101478, GC is partially funded by the European project CG-DICE }

\begin{abstract} We study the Harnack inequality for weak solutions of  a class of degenerate parabolic quasilinear PDE
$$\p_t u={-}X_i^* A_i(x,t,u,Xu)+ B(x,t,u,Xu),$$ 
in cylinders $\Om\times (0,T)$ where  $\Om \subset M$  is an open subset of a   manifold $M$ endowed with  control metric $d$ corresponding to  a system of Lipschitz continuous vector fields $X=(X_1,...,X_m)$ and  a measure $d\sigma$. We show that the Harnack inequality follows from
the basic hypothesis of doubling condition and a  weak Poincar\'e inequality in the metric measure space $(M,d,d\sigma)$. We also show that such hypothesis hold for a class of Riemannian metrics $g_\e$  collapsing to a sub-Riemannian metric $\lim_{\e\to 0} g_\e=g_0$ uniformly in the parameter $\e\ge 0$.
 \end{abstract}
\maketitle

\setcounter{tocdepth}{2}
\section{Introduction}
In two important works,  Saloff-Coste \cite{SC} and Grigor'yan \cite{grig} established the equivalence between a Harnack inequality for weak solutions of  a class of linear,  smooth coefficients PDE, with non-negative  symbol, and two key metric-measure properties of the ambient space: (1) a doubling inequality for balls in a control metric naturally associated to the operator and (2)  a Poincar\'e inequality involving a notion of gradient naturally associated to the operator. The Saloff-Coste-Grygor'yan results were later extended and applied to a broad range of PDE and variational problems (e.g.  \cite{MR1359957},  and references therein). One of the motivating examples where such theory  can be applied consists in a class of  subelliptic operators associated to a family of H\"ormander vector fields $X=(X_1,...,X_m)$ and their Carnot-Caratheodory distance $d(\cdot, \cdot)$ (see \cite{Montgomery:book}, \cite{NSW} and  \cite{SCS1991}).

The purpose of this note is twofold:

(a)  We  show that doubling property and  Poincar\'e inequalities imply the Harnack inequality for weak solutions of  the quasilinear, degenerate parabolic PDE \eqref{equation}. Viceversa, we note that if the Harnack inequality holds for every operator in this class, then by virtue of \cite{SC}, \cite{grig} both the doubling property and the  Poincar\'e inequalities hold. Our work represents an analogue, in the subelliptic setting, of  the work of Aronson and Serrin in \cite{MR0244638}.

(b) We want to show that the constants arising in the doubling and Poincar\'e inequalities are stable with respect to a well known and much used, Riemannian approximation scheme, in which the Carnot-Carath\'{e}odory metric is approximated (in the Gromov-Hausdorff sense) by a family of degenerating Riemannian metrics. Such approximations have been studied, for instance, in \cite{Montgomery:book} and \cite{monti-tesi} (where more references can be found).


The main  motivation for our work is to provide the necessary background  to study existence and regularity for a large class of non-linear degenerate parabolic PDE by approximation with solutions of ad-hoc {\it 'regularized'} versions of such PDE, in what is
occasionally called  the {\it vanishing viscosity} approach. Part (b) shows that the homogenous structure and the constants in the Poincar\'{e} inequality associated to the geometry underlying the approximating PDE are stable, so that the results in part (a) yield uniform estimates on the H\"older regularity of the solutions, in the maximum principle and in the Harnack inequalities.

Our proofs rest on a careful extension of ideas and techniques developed in the works \cite{MR0244638},  \cite{SC}, \cite{grig} and  \cite{NSW}. The arguments are  technically involved and rest on results spread through a large number of papers. Part (a) has been largely developed by the third named author (GR) in his
PhD dissertation \cite{rea}. While completing the final version of the present paper we were made aware of
recent, pertinent and  very interesting work of Kinnunen, Marola, Miranda and Paronetto \cite{KMMP} where an alternative approach
to part (a), in a more general setting, is studied. The authors in \cite{KMMP} derive Harnack inequalities out of membership
in the so-called parabolic De Giorgi classes, in the general context of doubling metric measure spaces endowed with a
Poincar\'{e} inequality. It is not clear what is the most general setting in which a regularity theory for parabolic PDE would make sense and could be developed. This problem is intimately connected to finding the most general setting for a first order calculus in metric measure spaces \cite{heinonen:calc}. Among other pertinent references we mention:  the result of   Kinnunen and Kuusi   \cite{MR2274548} where it is proved that doubling and (1,p)-Poincar\'{e} inequality
imply the parabolic Harnack inequality for a class of doubly nonlinear
equation of p-Laplacian type with weights, and the work of Sturm, where the Saloff-Coste and Grigor'yan results are extended to the general setting of local Dirichlet spaces (see for instance \cite{MR1387522}).

\subsection{\bf Analogues of Aronson-Serrin's Harnack inequality and maximum principle} Consider a distribution of  Lipschitz continuous vector fields  $X=\{X_1,...,X_m\}$  in a  bounded open set $\Om\subset \R^n$. We denote by  $X_i^*$ their adjoint,   and by $d(\cdot, \cdot):\Om\times \Om\to \R^+$ the control distance associated to $X$. In this paper we will assume that $d$ satisfies a doubling condition\footnote{See next section for the pertinent definitions} \eqref{doubling} w.r.t. Lebesgue measure and with doubling constant $C_D$. Correspondingly one finds a number $N>0$ that acts as {\it homogeneous dimension}, i.e. volume of metric balls grows at least like a power of $N$ of their radius (Proposition \ref{longprop}(i)). In this paper we will always assume $N>2$.

We will show that if one further assumes as  hypotheses a   Poincar\'{e} inequality \eqref{Poincare} with constant $C_P$ and the a.e. differentiability of the metric \eqref{Lip} with Lipschitz constant $C_L$, then one has  an analogue of Aronson and Serrin's results  from \cite{MR0244638}. Specifically, we establish a maximum principle and a Harnack inequality for weak solutions of the quasilinear degenerate parabolic PDE
\begin{equation}\label{equation}
\p_t u = -X_i^* A_i(x,t,u,Xu)+ B(x,t,u,Xu),
\end{equation}
where $A_i$ and $B$ satisfy the structure conditions \eqref{structure}.

\begin{thrm}[Maximum Principle]\label{T:max}
Let $u$ be a weak solution of \eqref{equation} in the parabolic cylinder $Q=\Om\times (0,T)$ and assume that there exists $M\in\R$  such that for every $\delta>0$
one has $u\le M+\delta$ in a neighborhood of the parabolic boundary $\p_p Q=(\Om\times\{t=0\})\cup( \p\Om\times (0,t))$.
There exists a positive constant $C$ depending only on $diam(\Om)$, $C_D$, $C_L$, $C_P$,  the structure conditions \eqref{structure} and on $Q$ such that for a.e. $x\in Q$ one has \begin{equation}\label{max} u(x)\le M+C\kappa,\end{equation}
where $\kappa =(||b||+||d|)|M|+||f||+||g||.$
\end{thrm}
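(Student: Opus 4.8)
The plan is to run a De Giorgi level-set iteration adapted to the metric measure space $(\Om,d,d\sigma)$, the decisive ingredient being a parabolic Sobolev inequality extracted from the doubling condition \eqref{doubling}, the Poincar\'e inequality \eqref{Poincare} and the homogeneous dimension $N>2$ of Proposition \ref{longprop}. It is convenient first to replace $u$ by $u-M-\delta$, so that the hypothesis reads $u\le 0$ in a neighborhood of $\p_p Q$ and it suffices to bound the essential supremum of $u$ on $Q$ by $C\kappa$ for each $\delta>0$; then for every level $k>0$ the truncation $w_k:=(u-k)_+$ vanishes near $\p_p Q$, hence is an admissible test function and no spatial cut-off is needed. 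One also subdivides $(0,T)$ into finitely many short time-slabs — the device of \cite{MR0244638} — which serves both to absorb a self-term in the energy estimate and to handle the borderline case of the Lebesgue exponents in \eqref{structure}, and one proves the estimate on each slab. All manipulations of $\p_t u$ are carried out on Steklov averages and then passed to the limit in the standard fashion.

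\textbf{Energy (Caccioppoli) estimate.} Using $w_k$ as a test function in the weak formulation of \eqref{equation} and integrating the resulting identity over $\Om\times(0,\tau)$, one estimates: the parabolic term from below by $\tfrac12\int_\Om w_k^2(\cdot,\tau)\,d\sigma$ (the term at $t=0$ vanishes); the principal term from below, via the coercivity part of \eqref{structure} and the truncation chain rule, by $\int_0^\tau\!\int_\Om|Xw_k|^2\,d\sigma\,dt$ up to lower order; and the remaining contributions coming from the growth conditions in \eqref{structure} on $A_i$ and $B$, using $|u|\le w_k+|M|+\delta+k$ on the superlevel set $A_k:=\{u>k\}$, by Cauchy--Schwarz and Young's inequality after absorbing the $|Xw_k|$-terms and, thanks to the shortness of the slab together with doubling, the $w_k^2$-term into the left side. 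Taking the supremum over $\tau$ gives a Caccioppoli inequality bounding
\[
\sup_{0<\tau<T}\int_\Om w_k^2(\cdot,\tau)\,d\sigma+\int_0^T\!\!\int_\Om|Xw_k|^2\,d\sigma\,dt
\]
by $C\big((\|b\|+\|d\|)(|M|+\delta+k)+\|f\|+\|g\|\big)^2$ multiplied by a positive power $\mu(A_k)^{\theta}$ of the space--time measure $\mu=d\sigma\,dt$, where $\theta>0$ is furnished by the subcriticality of the mixed Lebesgue exponents built into \eqref{structure}.

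\textbf{Parabolic Sobolev inequality and iteration.} Since $N>2$, combining \eqref{doubling} with \eqref{Poincare} in the manner of Haj\l asz--Koskela produces a Sobolev--Poincar\'e inequality on metric balls with gain exponent $N/(N-2)$, hence a global Sobolev inequality for functions supported in $\Om$; interpolating this against $L^\infty_tL^2_x$ in the classical Ladyzhenskaya way yields, for $v$ vanishing near $\p_p Q$,
\[
\int_0^T\!\!\int_\Om|v|^{2\kappa_0}\,d\sigma\,dt\ \le\ C\Big(\sup_{0<\tau<T}\int_\Om v^2(\cdot,\tau)\,d\sigma\Big)^{\kappa_0-1}\int_0^T\!\!\int_\Om|Xv|^2\,d\sigma\,dt,\qquad \kappa_0=1+\tfrac2N .
\]
Applying this with $v=w_k$, feeding in the energy estimate, and using $w_k\ge h-k$ on $A_h$ for $h>k$, one arrives at a recursion of De Giorgi type along a geometric sequence of levels $k_j$. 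Choosing the base level suitably, the levels stay in a band where, thanks to $(\|b\|+\|d\|)|M|\le\kappa$ and the absorption of $\|b\|+\|d\|$ into the structure-dependent constant, the right-hand side of the energy estimate is at most $C(\kappa+\delta)^2\,\mu(A_{k_j})^{\theta}$; this scales favorably against $(k_{j+1}-k_j)^{-2\kappa_0}$ and delivers $Y_{j+1}\le C\,2^{\gamma j}\,Y_j^{1+\beta}$ for $Y_j:=\mu(A_{k_j})$, with $\beta=\theta\kappa_0-1>0$, $\gamma>0$ and $C$ depending only on $\operatorname{diam}\Om$, $C_D$, $C_L$, $C_P$, the structure conditions \eqref{structure} and $Q$. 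The classical fast-geometric-convergence lemma then forces $Y_j\to0$, provided $Y_0=\mu(A_{k_0})$ lies below the resulting threshold; that this is so — after, if necessary, partitioning $Q$ into boundedly many pieces of small measure — is where the dependence of $C$ on $Q$ enters. Hence $u\le M+\delta+C\kappa$ a.e.\ on $Q$, and letting $\delta\to0$ proves \eqref{max}.

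\textbf{Where the difficulty lies.} The genuinely substantive step is the quantitative parabolic Sobolev inequality: one must verify that the Haj\l asz--Koskela route from a \emph{weak} Poincar\'e inequality to a global Sobolev embedding survives in the present setting — merely Lipschitz (rather than smooth, H\"ormander) vector fields, a merely doubling measure, and only the a.e.\ differentiability \eqref{Lip} of the distance — and that all the constants depend solely on $C_D$, $C_L$, $C_P$ and $\operatorname{diam}\Om$. The other delicate point is the bookkeeping in the iteration that forces the coefficients and forcing to enter the final bound precisely through the combination $\kappa=(\|b\|+\|d\|)|M|+\|f\|+\|g\|$ and not through a larger quantity; this is exactly what the choice of base level, the time-slab subdivision and the one-sidedness of the hypothesis are designed to deliver.
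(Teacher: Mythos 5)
You take a De Giorgi level-set iteration, whereas the paper follows Aronson--Serrin's Moser iteration much more closely: Lemma \ref{step1} runs a Moser iteration over powers $\bar u^\beta$ via the auxiliary energy inequality of Lemma \ref{fundamental}, with the iteration gain $\sigma = 1 + 2\theta/N$ built into the norm $|||\cdot|||$ of Definition \ref{norma3barre}, yielding the conditional bound $u \le C_1(\|\tilde u\|_{2,\infty} + \|X\tilde u\|_{2,2} + \kappa)$; Lemma \ref{step2} then closes the estimate by iterating the $\beta=1$ energy inequality over short time slabs to show $\|\tilde u\|_{2,\infty},\ \|X\tilde u\|_{2,2} \lesssim \kappa$. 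Your single-shot De Giorgi recursion over geometric levels $k_j$ is a genuine alternative --- closer in spirit to the parabolic De Giorgi-class approach of \cite{KMMP} --- and it relies on the same ingredients (Steklov averages, a Caccioppoli estimate, the parabolic Sobolev--Ladyzhenskaya embedding of Lemma \ref{lemma2}, and the time-slab subdivision); such a proof can be made to work.

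However, the stated iteration exponent does not obviously close the recursion under the paper's hypotheses. You claim $Y_{j+1}\le C\,2^{\gamma j}Y_j^{1+\beta}$ with $\beta=\theta\kappa_0-1>0$, where $\kappa_0=1+2/N$ and $\theta$ is the parameter of \eqref{theta}. But $\theta$ in \eqref{theta} is only guaranteed to be \emph{some} positive number and can be arbitrarily close to $0$ when the Lebesgue exponents of the coefficients in \eqref{structure} approach the critical regime; in that case $\theta\kappa_0<1$ and the fast-geometric-convergence lemma does not apply. In a correct De Giorgi bookkeeping the power of $\mu(A_k)$ that emerges from the Caccioppoli estimate --- after H\"older against the $L^{p,q}$ and $L^{\al,\be}$ norms of $b,c,d,e,f,g,h$ --- is of the form $1-\eps$ with $\eps$ controlled by the subcriticality in \eqref{structure}; the iteration exponent becomes $(1-\eps)\kappa_0$, and one must verify $\eps<\tfrac{2}{N+2}$, which is neither automatic nor addressed in your sketch. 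The paper's Moser scheme sidesteps exactly this subtlety by feeding the mixed $L^{p,q}$ norms directly into $|||\cdot|||$, so the gain appears without ever isolating a power of the superlevel-set measure. If you pursue the De Giorgi route, make that exponent computation explicit and check it against \eqref{theta}.
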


\begin{thrm}[Harnack Inequality]\label{T:harnack}
Let $u\ge 0$ be a weak solution of \eqref{equation} in $Q$ and $0<\rho<20R$ with $R$ as in \eqref{doubling}. If one has a  subcylinder $Q_{3\rho}=B(\bar x,3\rho)\times(\bar t-9\rho^2,\bar t)\subset Q$ then there exists a constant $C>0$ depending on
$C_D$, $C_L$, $C_P$, the structure conditions \eqref{structure} and on $\rho$ such that
\begin{equation}
\max_{Q^-}u\le C\min_{Q^+} (u+\rho^\theta k),
\end{equation}
where $$Q^+=B(x,\rho)\times(\bar t-\rho^2,\bar t)\text{  and }Q^-=B(x,\rho)\times(\bar t- 8\rho^2, \bar t-7 \rho^2)$$
$\theta>0$ is defined as in \eqref{theta}, and we have let $k=||f||+||g||+||h||$.

\end{thrm}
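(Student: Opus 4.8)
The plan is to run Moser's iteration, adapted to the metric--measure space $(M,d,d\sigma)$ following \cite{SC}, \cite{grig} and the parabolic arguments of \cite{MR0244638}. Set $w=u+\rho^{\theta}k$; the shift by $\rho^{\theta}k$ with $\theta$ as in \eqref{theta} is chosen precisely so that, after dividing the structure conditions \eqref{structure} through by $w$, the inhomogeneous data $f,g,h$ contribute only terms that are subordinate to the principal ones at the natural parabolic scaling. The first step is the Caccioppoli (energy) inequality: using $\zeta^{2}w^{p-1}$ as a test function in \eqref{equation}, where $\zeta$ is a Lipschitz parabolic cutoff, and invoking \eqref{structure} together with Young's inequality, one obtains for every $p\neq 0$
\[
\sup_{t}\int \zeta^{2}w^{p}\,d\sigma+\iint \zeta^{2}\bigl|X(w^{p/2})\bigr|^{2}\,d\sigma\,dt\;\lesssim\;\iint\bigl(|X\zeta|^{2}+|\partial_{t}(\zeta^{2})|\bigr)\,w^{p}\,d\sigma\,dt ,
\]
with a constant depending only on $p$, on the structure constants, and on $C_{L}$ --- the latter because a.e.\ differentiability of the distance \eqref{Lip} is what guarantees that cutoffs built from $d(\cdot,\bar x)$ have horizontal gradient bounded by $C_{L}/(\text{radius})$.

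Next I would turn the doubling hypothesis \eqref{doubling} and the weak Poincar\'e inequality \eqref{Poincare} into a genuine (Sobolev) inequality of the form $\bigl(\frac{1}{|B|}\int_{B}|v|^{2\chi}\bigr)^{1/\chi}\lesssim \rho^{2}\,\frac{1}{|B|}\int_{B}|Xv|^{2}+\frac{1}{|B|}\int_{B}|v|^{2}$ with $\chi=\tfrac{N}{N-2}>1$, using the assumption $N>2$ from Proposition \ref{longprop}(i); this passage from the \emph{weak} Poincar\'e inequality (different radii on the two sides) to the strong Sobolev inequality goes through a Whitney-type covering and chaining argument (cf.\ \cite{NSW} for the properties of the control distance used here), with all constants tracked in terms of $C_{D},C_{P},C_{L}$ only. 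Feeding the energy inequality into this Sobolev inequality and interpolating between the $L^{\infty}_{t}L^{2}$ bound and the $L^{2}_{t}$ horizontal-Sobolev bound yields a gain of integrability on slightly shrunk subcylinders --- a single ``self-improving'' inequality valid simultaneously for every positive $p$ and for small negative $p$. In parallel, testing \eqref{equation} against $\zeta^{2}/w$ gives $\iint \zeta^{2}|X\log w|^{2}\lesssim \iint |X\zeta|^{2}\,d\sigma\,dt+(\text{lower order})$ and, comparing $\log w$ with a suitable time-dependent additive constant, a one-sided control of $\partial_{t}\int \zeta^{2}\log w$; together with \eqref{Poincare} this shows that $\log w$ belongs to a parabolic BMO-type class on $Q_{3\rho}$, i.e.\ there is $c=c(\bar x,\bar t)$ such that the measures of $\{(x,t)\in Q^{-}:\log w<-\lambda+c\}$ and of $\{(x,t)\in Q^{+}:\log w>\lambda-c\}$ are both $\lesssim |Q_{3\rho}|/\lambda$ for all $\lambda>0$.

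With these two ingredients in hand, the conclusion follows from the abstract lemma of Bombieri and Giusti, in the form used in \cite{SC}: a positive function satisfying the self-improving reverse-type inequality for exponents $p\le p_{0}$ together with the weak $L^{1}$ bound on its logarithm satisfies $\max_{Q^{-}}w\le C\min_{Q^{+}}w$, which is the claimed estimate since $w=u+\rho^{\theta}k$. Equivalently, one can phrase the last two stages as: iterate the self-improving inequality over a sequence of nested cylinders to get $\max_{Q^{-}}w\lesssim(\frac{1}{|Q'|}\int_{Q'}w^{p_{0}})^{1/p_{0}}$ and $(\frac{1}{|Q''|}\int_{Q''}w^{-p_{0}})^{-1/p_{0}}\lesssim\min_{Q^{+}}w$ for intermediate cylinders $Q'\supset Q^{-}$, $Q''\supset Q^{+}$, and use the logarithmic (John--Nirenberg) bound to cross over from $w^{p_{0}}$ to $w^{-p_{0}}$; the time gap between $Q^{-}$ and $Q^{+}$ is exactly what the crossover step forces.

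The main obstacle is twofold. Because we are in a metric--measure space with only Lipschitz vector fields, none of the Euclidean conveniences (coordinates, smooth partitions of unity, the classical Sobolev embedding) are available: every cutoff must be manufactured from the control distance and estimated through \eqref{Lip}, and the upgrade from the weak Poincar\'e inequality to the Sobolev inequality must be carried out by an explicit chaining/covering argument so that the constants genuinely depend only on $C_{D},C_{P},C_{L}$. The second, and more delicate, difficulty is the bookkeeping of the lower-order and inhomogeneous terms in \eqref{structure} through every stage --- the energy inequality, the parabolic interpolation, the iteration, and especially the logarithmic estimate --- so as to verify that the shift by $\rho^{\theta}k$ with $\theta$ as in \eqref{theta} really renders all error terms subordinate, and that the final constant depends only on $C_{D},C_{L},C_{P}$, the structure constants and $\rho$. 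This last point --- uniformity of the constant --- is precisely what part (b) of the paper will need for the Riemannian approximation scheme.
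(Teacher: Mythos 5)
Your outline follows essentially the same strategy as the paper's proof: Caccioppoli-type energy estimates from power test functions (Lemma~\ref{fundamental}), a Sobolev inequality deduced from doubling and Poincar\'e (Proposition~\ref{longprop}(iv), Lemma~\ref{lemma2}), Moser iteration for the sup/inf estimates (Lemma~\ref{supinf}), a logarithmic estimate placing $\log\bar u$ in a parabolic BMO class (Lemma~\ref{bridge}), and a crossover step (Corollary~\ref{bridge-1}); the role you give to $\theta$ from \eqref{theta} and to the additive shift also matches. The one substantive variation is the crossover: you invoke the Bombieri--Giusti abstract lemma in the form used by Saloff-Coste, whereas the paper applies a parabolic John--Nirenberg lemma for spaces of homogeneous type with a time lag, due to Aimar (Proposition~\ref{JN}), exactly as in Aronson--Serrin; you rightly observe the two packagings are equivalent, but the time-lagged cylinder version of John--Nirenberg is itself a nontrivial ingredient whose availability in the present metric--measure setting needs to be named and justified. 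Two technical points you gloss over but the paper treats with care: weak solutions have no time derivative, and testing against $\zeta^2 w^{p-1}$ must be done through the Steklov averages \eqref{weak1} rather than Aronson--Serrin's convolution device, which does not extend to variable-coefficient vector fields; and the logarithmic estimate uses not the bare Poincar\'e inequality \eqref{Poincare} but the weighted version \eqref{weighted} of Proposition~\ref{longprop}(iii) with a radially shaped cutoff $\phi$, which is what makes the differential inequality \eqref{53} close.
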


\subsection{\bf Stability of the homogenous structure and of the Poincar\'e inequality in the Riemannian approximation scheme to a sub-Riemannian metric}
We consider examples of vector fields satisfying all the
assumptions (D), (L) (P). In view of the seminal work in \cite{NSW} and \cite{jer:poincare} it is clear that these assumptions are
satisfied by smooth distributions of  vector fields
$X=(X_1,\ldots ,X_m)$ in $R^n$ satisfying the H\"ormander finite rank condition
\begin{equation}    \label{Hor} rank \,\, Lie
(X_1,\ldots ,X_m)(x)=n, \quad \forall x\in \Omega.\end{equation}
Consequently  weak solutions of the equation
\begin{equation} 
\p_t u = -X_i^* A_i(x,t,u,Xu)+ B(x,t,u,Xu),
\end{equation}
written in terms of these vector fields satisfy Harnack's inequality and are H\"older continuous with respect to the Carnot-Carath\'{e}odory metric associated to $X$.

\bigskip

In many applications (see for instance \cite{Pauls:minimal}, \cite{ccm1}, \cite{CCM2}, \cite{cittimanfredini:uniform} and \cite{chy}) one is interested in approximating the subelliptic PDE with a sequence of elliptic regularizations, so that the corresponding solutions converge to {\it viscosity solutions} of the original PDE. From the point of view of the metric background this approximation corresponds to studying {\it tame} approximations of the sub-Riemannian metric (and the associated  distance function $d_0(\cdot, \cdot) $) with
a one-parameter family of degenerating Riemannian metric (associated to distance functions $d_\e(\cdot, \cdot)$), which converge in the Gromov-Hausdorff sense as $\e\to 0$ to the original one. Such approximations
have been studied, for instance, in \cite{Montgomery:book} and \cite{monti-tesi}.
  A viscosity approximation of a time independent equation was considered in \cite{CiMa-F} where uniform estimates for the fundamental solution  were established.
Here we further develop these results, applying it to the time dependent equation, and proving that also the
constants in the Doubling and  Poincar\'{e} inequalities can be chosen independently of the parameter $\epsilon,$
thus leading to uniform H\"older regularity of weak solutions of the associated approximating equations.

To show a simple, almost na\"{i}ve, example of such approximation as well as an application of our work, we consider the Heisenberg group $\Hone$ which can be identified with $R^3$ with a set of left invariant horizontal vector fields $X_1,X_2$ such that $X_1,X_2, X_3=[X_1,X_2]$ generates the whole Lie algebra (see \cite{ste:harmonic} for a detailed description). For $\e\ge 0$ we consider a family of left-invariant Riemannian metrics $g_\e$ in $\Hone$ defined so that the frame $X_1, X_2, \e X_3$ is orthonormal.
A well known technique to study solutions of the degenerate parabolic PDE
(to mention one example out of many)
\begin{equation}\label{heat}\p_t u= X_1^2 u +X_2^2 u\end{equation}
is to consider a family of solutions $u^\e$ of the strongly parabolic PDE
\begin{equation}\label{heate}\p_t u^\e = X_1^2 u^\e + X_2^2 u^\e + \e^2 X_3^2 u^\e\end{equation}
with fixed ($\e-$independent) data on the boundary of given a bounded parabolic cylinder in $\Hone\times \R^+$. The PDE  \eqref{heate} is the heat equation for the Riemannian metric $g_\e$, while \eqref{heat} is the {\it subelliptic} heat equation for
the sub-Riemannian metric $g_0$. Parabolic theory yields existence and uniqueness of smooth solutions $u^\e$ to \eqref{heate}. The problem is that such smoothness may degenerate as $\e\to 0$. In this paper we show (see Proposition \ref{indipendenza}) that the doubling property and the Poincar\'{e} inequality corresponding to the Riemannian metrics $g_\e$ are stable as $\e\to 0$ and as a consequence of the  {\it stable} Harnack inequality Theorem \ref{T:harnack} the set  $\{u^\e\}_{\e>0}$ is a normal family, with uniform limit $u_0$. Applying the same argument to right invariant derivatives of the solutions (see for instance \cite{CC}) we can easily prove that $u_0$ is smooth and solves \eqref{heat}. Obviously, the linear structure of \eqref{heat} provides far more effective ways of proving existence of smooth solutions, however such methods
cannot be  applied for non-linear PDE, while the techniques in the present paper are purposely designed to be used in the non-linear setting.

The core of  argument  in the stability proof consists in a careful extension of  the arguments in \cite{NSW} to include the additional parameter $\e$,
and show  $\e-$independent bounds on the Jacobian of an exponential mapping related to commutators of the H\"ormander vector fields.
Once such bounds are established,  the doubling condition follows immediately
and the Poincar\'{e} inequality is a consequence of  the work in \cite{LanMor}, \cite{GarofaloNhieu:lip1998} and \cite{frss:embedding}.  To better appreciate the  stability results, we note that even in the simple example described above the geometry of the space changes  radically as $\e\to 0$. In this approximation the curvatures tensors blow up, and the shape and volume of the metric balls changes from Riemannian to sub-Riemannian.  Yet, the homogeneous structure, as well as the constants associated to the Poincar\'e inequality  remain the same.

{\it Acknowledgements} The authors would like to thank Juha Kinnunen, Niko Marola and Michele Miranda Jr. for sharing with us helpful comments and pertinent references.

\section{Preliminaries}
Let $X_1,...,X_m$ be Lipschtiz continuous vector fields in a open set $\Om\subset \R^n$. We denote by $d(\cdot, \cdot):\Om\times \Om\to \R^+$ the control distance associated to $X$ and for $x\in\Om$ and $r>0$, by $B(x,r)=\{y\in \R^n| d(x,y)<r\}$ the corresponding metric balls and by $|B(x,r)|$ their Lebesgue measure.  The average of a function $u$ over  a ball $B=B(x,r)$ is denoted by $u_B$.  For $1\le p\le \infty$ we define the Sobolev spaces $W^{1,p}(\Om)=\{u\in L^p(\Om)|\ Xu=(X_1 u,...,X_m u)\in L^p(\Om)\}$ and $W^{1,p}_0$ to be the closure of $C^\infty(\Om)$ in the norm $||u||_{1,p}^p=||u||_p +||Xu||_p$.

In our results we will rely on the following  hypothesis on the background geometry induced by $X$: For every compact $K\subset \Om$ there exists constants
$C_D=C_D(X,K), C_L=C_L(X,K),C_P=C_P(X,K) >0 ,$   and  $R=R(X,K)>0$ such that for every $x\in K$ and $0<r<R$ one has

\begin{equation}\label{doubling}\tag{D}
 |B(x,r)|\ge C_D|B(x,2r)|.
\end{equation}

\begin{equation}\label{Lip}\tag{L}
d(\cdot, x) \text{ is differentiable a.e. in } \Om \text{ and } | |X d(\cdot, x)||_{L^{\infty}(K)} \le C_L \text{ for every } x\in K.
\end{equation}

\begin{equation}\label{Poincare}\tag{P}
 \int_{B(x,r)} |u-u_B|^2 dx \le C_P {r^2}\int_{B(x,2r)} |Xu|^2 dx .
\end{equation}

In the following we say that a constant depends on $C_D,C_L$ and $C_P$ if it is uniformly bounded when $R,C_D,C_L$ and $C_P$  are so.
As a consequence of these hypotheses one has that the metric measure space $(K,d,dx)$ is an homogenous space with a Poincar\'{e} inequality, and as such it enjoys several useful properties listed below.

\begin{prop}\label{longprop} If hypotheses \eqref{doubling},\eqref{Lip}, \eqref{Poincare} hold then for every compact subset $K\subset \Om$, $x\in K$ and $0<r<R$ one has:

\begin{itemize}
\item (i) (Lower bound on volume of balls) There exists  constants $N=N(C_D)>0$ (called {\it homogeneous dimension} of $K$ w.r.t. $(X,d,dx)$) and  a constant $C_G=C_G(C_D)>0$, such that $|B(x,r)|\ge C_G r^N$.
\item (ii) There exists a test function $\phi\in W^{1,\infty}_0(B(x,2r))$ and a constant $C=C(C_L)>0$, such that $\phi=1$ in $B(x,r)$ and $|X\phi|\le C/r$.
\item (iii)  (Weighted Poincar\'e inequality, \cite[Theorem 8.1]{MR1359957}) Let  $\phi:\R^+\to [0,1]$ be a non increasing function with compact support in a finite interval $[0,R)$ and such that (1) $\sqrt{\phi}\in C^{\infty}_0([0,R))$; (2) For $0<r<R$ one has $\phi(r+ \frac{1}{2}\min(R-r,\frac{r}{2}))\ge \al \phi(r)$. Then there exists  $C=C(C_D,C_P,C_L,N,R)>0$ such that
\begin{equation}\label{weighted}\tag{WP}
\int  |u(y)-u_\phi|^2 \phi(d(y,x)) dy \le C{\bar \kappa}^{-2/N} (\int \phi(d(y,x)) dy)^{2/N} \int |Xu|^2(y) \phi(d(y,x)) dy,
\end{equation}
for every function $u\in W^{1,2}(\Om)$ and where we have let ${\bar \kappa}=\inf_{y\in K} |B(y,R)| (2R)^{-\log_2 C_D}>0$ and $u_\phi=\frac{1}{\int \phi(d(y,x)) dy} \int u (y)\phi(d(y,x)) dy$.
\item (iv) (Sobolev Embedding \cite{MR1359957, hak:sobolev2}) There exists $C=C(C_D,C_P,N)>0$ such that  for every $1\le p <N$,
\begin{equation}\label{Sobolev}\tag{S}
\bigg( \int_\Omega
|u|^{\frac{Np}{N-p}} dx
\bigg)^{\frac{N-p}{Np}}
 \le C  \bigg( \int_{\Omega} |Xu|^p dx \bigg)^{\frac{1}{p}}
\end{equation}
for every function $u\in W^{1,p}_0(\Om)$.
\end{itemize}

\end{prop}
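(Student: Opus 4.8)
The plan is to handle the four items separately: (i) and (ii) I would derive directly from \eqref{doubling} and \eqref{Lip}, while (iii) and (iv) I would obtain by specializing the metric–measure Sobolev machinery of Haj\l asz--Koskela \cite{MR1359957, hak:sobolev2} to the space $(K,d,dx)$, which by \eqref{doubling} and \eqref{Poincare} is doubling and carries a weak $(2,2)$-Poincar\'e inequality.

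For (i) I would iterate \eqref{doubling}. Given $x\in K$ and $0<r<R$, pick $j=\lfloor\log_2(R/r)\rfloor$, so that $R/2<2^jr\le R$; applying \eqref{doubling} at the admissible scales $r,2r,\dots,2^{j-1}r$ gives $|B(x,r)|\ge C_D^{\,j}|B(x,2^jr)|\ge C_D^{\,j+1}|B(x,R)|$, the last step by monotonicity of balls together with one more application of \eqref{doubling} at scale $R/2$. Setting $N:=\log_2(1/C_D)>0$ one has $C_D^{\,j}\ge (r/R)^N$, hence
\[
|B(x,r)|\ \ge\ C_D R^{-N}\Big(\inf_{y\in K}|B(y,R)|\Big)\,r^N .
\]
For this to yield a positive $C_G$ I need $\inf_{y\in K}|B(y,R)|>0$, which holds because each $B(y,R)$ is a nonempty open set (it contains $y$) and hence has positive Lebesgue measure, and $y\mapsto|B(y,R)|$ is lower semicontinuous on the compact set $K$; this pins down $N$ and $C_G$ as structural constants.

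For (ii) I would take the explicit cutoff $\phi(y):=\min\{1,\max\{0,\,3-2r^{-1}d(x,y)\}\}$, which equals $1$ on $B(x,r)$, vanishes outside $B(x,\tfrac32 r)$, and has support a compact subset of $B(x,2r)$. Since $\phi$ is Lipschitz for $d$ it lies in $W^{1,\infty}$, and by \eqref{Lip} together with the chain rule for a real Lipschitz function composed with a $W^{1,\infty}$ function (using that $Xd(x,\cdot)=0$ a.e.\ on any positive-measure level set of $d(x,\cdot)$, so the corners of the truncation are harmless) one gets $|X\phi|\le 2C_L/r$ a.e., i.e.\ $\phi\in W^{1,\infty}_0(B(x,2r))$ with $|X\phi|\le C/r$ and $C=C(C_L)$.

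For (iii) and (iv) the substance is already contained in \cite{MR1359957, hak:sobolev2}: \eqref{weighted} is \cite[Theorem 8.1]{MR1359957} once one checks that hypotheses (1)--(2) on $\phi$ are precisely the weight-admissibility conditions required there, and that $\bar\kappa$ is the explicit positive structural constant furnished by that theorem, its positivity being a reformulation of (i); and \eqref{Sobolev} is the Sobolev embedding of \cite{MR1359957, hak:sobolev2} for $W^{1,p}_0$, with the exponent $Np/(N-p)$ read off from the homogeneous dimension $N$ of (i). The main care throughout — and the only place where I expect any friction — is bookkeeping: verifying that the radial weight $y\mapsto\phi(d(y,x))$ genuinely satisfies the admissibility hypotheses of the cited weighted Poincar\'e inequality, and that every constant so produced depends on the geometry only through $C_D,C_P,C_L,N,R$ and not on $x$ or finer data. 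The single non-formal input is the positivity and lower semicontinuity of $y\mapsto|B(y,R)|$ used in (i).
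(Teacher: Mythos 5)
Your proposal is correct and matches the paper's (implicit) treatment: the paper states Proposition~\ref{longprop} without proof, regarding (i) and (ii) as standard consequences of \eqref{doubling} and \eqref{Lip} and deferring (iii), (iv) to the cited Haj\l asz--Koskela results, which is exactly your division of labor. Your explicit dyadic iteration for (i) with $N=\log_2(1/C_D)$ and your Lipschitz cutoff $\phi=\min\{1,\max\{0,3-2r^{-1}d(x,\cdot)\}\}$ for (ii) are the expected arguments; the dependence of $C_G$ on $R$ and $\inf_{y\in K}|B(y,R)|$ that you surface is also implicit in the paper's convention on constants (and is precisely the quantity $\bar\kappa$ appearing in item (iii)).
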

We also notice that in view of Cheeger's Rademacher's theorem \cite{ch:lipschitz}, the differentiability hypothesis \eqref{Lip} holds as a consequence of \eqref{doubling} and \eqref{Poincare}. In the setting of metrics generated by families of Lipschitz vector fields see
 also \cite[Theorem 1.5]{GarofaloNhieu:lip1998} and references therein.

Next, we consider a parabolic cylinder $Q=\Om\times (0,T)$ and address the parabolic BMO spaces and the John-Nirenberg lemma for the homogenous space $(Q,d_p,dx)$, where $d_p$ denotes  the parabolic distance $d_p((x,t),(\bar x,\bar t))=\max\{d(x,\bar x), \sqrt{|t-\bar t|}\}$.

 For $(x, t)\in Q$ and $r>0$ sufficiently small  we consider a cylinder  $$Q_{r}(x, t)=B(x,r)\times(t-8 r^2, t) \subset B_{p}((x, t), 100 r):=\{(y,s)|d_p((x,t),(y,s))<100r\} \subset  Q$$  and
  define the upper and lower cylinders \begin{multline}\label{cylinders}
  Q^-_r(x,t)=B(x,r)\times(t-8r^2, t- 7 r^2), \quad D^-_r(x,t)=
B_\e(x,r/2)\times(t-\frac{15}{2} r^2, t_0- 7 r^2), \quad  \\ Q^+_r(x,t)=B(x,r)\times (t- r^2,t) \text{ and }
D^+_r(x,t)=B(x,r/2)\times (t- r^2, t-\frac{1}{2} r^2).
\end{multline}
We define the {\it time lag} function $T((x,t),r)=( (x,t-8r^2), r)$ and observe that for all $(x,t)\in Q$ one has
$Q^-_r(x,t)=T(Q^+_r(x,t),r)$, thus satisfying \cite[Hypothesis (1.8)]{Aimar}.
In view of  the  doubling hypothesis  \eqref{doubling} one can apply  \cite[Theorem 1.7]{Aimar} and   conclude the following version of the John-Nirenberg lemma
\begin{prop}\label{JN}
If there exists $A>0$ and $v\in L^1(Q)$, such that for all cylinders $Q_{2r}, Q_r^-,Q_r^+\subset Q$ as defined above
there is a number $C=C(Q)$ such that
\begin{equation}\label{Ipotesi H}
 \frac{1}{|Q^+_r|}\int_{Q^+_r}  \int \sqrt{(v-C(Q))^+} dx dt \le A \text{ and }
 \frac{1}{|Q^-_r|}  \int_{Q^-_r} \int\sqrt{(C(Q)-v)^+} dx dt \le A
 \end{equation}
 then there exist $C=C(\Om, C_D,A), \delta=\delta(\Om, C_D,A)>0$  such that  if we set $f=e^{-v}$ then for all $D^+_r,D^-_r\subset Q_{3r}\subset Q$ as defined above
\begin{equation}\label{john-nirenberg}
\bigg( \frac{1}{|D^+_r|} \int_{D^+_r} \! \int f^{-\delta}\bigg)\bigg( \frac{1}{| D^-_r|} \int_{D^-_r } \!\int f^{\delta}\bigg) \le C,
\end{equation}
\end{prop}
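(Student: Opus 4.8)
The plan is to obtain Proposition \ref{JN} as a specialization of the abstract parabolic John--Nirenberg inequality of Aimar \cite[Theorem 1.7]{Aimar}, applied to the metric measure space $(Q,d_p,dx\,dt)$ and to $v$ via the multiplicative substitution $f=e^{-v}$. First I would verify that $(Q,d_p,dx\,dt)$ is a space of homogeneous type in the sense of that paper: a parabolic cylinder $B(x,r)\times(t-cr^2,t)$ has Lebesgue measure comparable, with absolute constants, to $|B(x,r)|\,r^2$, so the spatial doubling hypothesis \eqref{doubling} together with the scaling $r^2\mapsto(2r)^2$ of the time direction gives a doubling inequality for $d_p$-balls (equivalently for the cylinders $Q_r$, $Q^\pm_r$) with a constant depending only on $C_D$. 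Combined with the time-lag identity $Q^-_r(x,t)=T(Q^+_r(x,t),r)$ recorded above, i.e.\ \cite[Hypothesis (1.8)]{Aimar}, all the structural hypotheses of Aimar's theorem are then available on $Q$.

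Next I would match \eqref{Ipotesi H} with the input of \cite[Theorem 1.7]{Aimar}: it asserts a two-sided, scale-uniform control on the oscillation of $v$ about a single centering value $C(Q)$ --- from above on the forward cylinders $Q^+_r$ and from below on the backward cylinders $Q^-_r$ --- which is precisely membership of $v$ in the relevant parabolic BMO class. The one subtlety is the modulus $t\mapsto\sqrt t$ appearing in \eqref{Ipotesi H} in place of the identity; I would dispose of it either by invoking Aimar's formulation with a general Orlicz bump, or by checking that this weaker but scale-uniform bound still drives the Calder\'on--Zygmund stopping-time argument underlying the exponential self-improvement. The theorem then produces $\delta>0$ and $C>0$, depending only on $\Omega$, $C_D$ and $A$ (the dependence on $\Omega$ entering only through the admissible range of scales), such that on the inner cylinders $D^\pm_r\subset Q_{3r}$ one has
\[
\frac{1}{|D^+_r|}\int_{D^+_r} e^{\,\delta\,(v-C(Q))^+}\,dx\,dt\le C,
\qquad
\frac{1}{|D^-_r|}\int_{D^-_r} e^{\,\delta\,(C(Q)-v)^+}\,dx\,dt\le C.
\]

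Finally I would assemble \eqref{john-nirenberg}. With $f=e^{-v}$, on $D^+_r$ we have $f^{-\delta}=e^{\delta v}\le e^{\delta C(Q)}e^{\delta(v-C(Q))^+}$ and on $D^-_r$ we have $f^{\delta}=e^{-\delta v}\le e^{-\delta C(Q)}e^{\delta(C(Q)-v)^+}$; multiplying the two averaged bounds, the factors $e^{\pm\delta C(Q)}$ cancel and \eqref{john-nirenberg} follows with constant $C^2$. The conceptual point to stress is exactly this cancellation: the unknown centering value $C(Q)$ --- morally a parabolic median of $v$ --- disappears from the product, so the final constant depends only on $C_D$, $A$ and $Q$ and not on $v$. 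I expect the main obstacle to be bookkeeping rather than ideas: confirming that the nested geometry $D^\pm_r\subset Q^\pm_r\subset Q_{3r}\subset Q$ and the mutual comparabilities among $Q_{2r}$, $Q^\pm_r$ and $D^\pm_r$ match the hypotheses of \cite[Theorem 1.7]{Aimar} exactly, and making sure the square-root modulus in \eqref{Ipotesi H} really is strong enough to yield full exponential integrability in the conclusion.
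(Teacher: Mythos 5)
Your proposal is correct and takes essentially the same route as the paper: the authors simply verify the doubling of $(Q,d_p,dx\,dt)$ and the time-lag condition $Q^-_r(x,t)=T(Q^+_r(x,t),r)$ in the paragraph preceding the proposition, then cite \cite[Theorem 1.7]{Aimar} and give no further proof. Your elaboration of the bookkeeping --- the comparability of parabolic cylinders, the matching of \eqref{Ipotesi H} to Aimar's BMO hypothesis (including the $\sqrt{\cdot}$ modulus, which is exactly what Lemma \ref{bridge} produces), and the cancellation of $e^{\pm\delta C(Q)}$ in the final product --- spells out precisely what that citation entails.
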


\section{Maximum principle and Harnack inequality}

Let  $\Om\subset\R^n$ be  a bounded open set and let $Q=\Om\times (0,T)$. For a function $u:Q\to \R$, and $1\le p,q$ we define the norms
\begin{equation}\label{lpq-norms}
||u||_{p,q}^q=\Big(\int_0^T (\int_\Om |u|^p dx)^{\frac{q}{p}} dt\Big)^{\frac{1}{q}},
\end{equation}
and the corresponding Lebesgue spaces $L^{p,q}(Q)=L^q([0,T], L^p(\Om))$. To simplify notations we will omit the subscripts $p,q$ when their values are clear in view of the context.  One has a useful reformulation of the Sobolev embedding theorem \eqref{Sobolev} in terms of $L^{p,q}$ spaces and a related interpolation inequality:
\begin{lemma}\label{lemma2}
(i) Let $Xu\in L^{2,2}(Q)$ and assume that for all $0<t<T$,  $u(\cdot, t)$ has  compact support in $\Om\times\{t\}$. There exists $k=k(
C_D,C_P,N)>0$ such that $||u||_{\frac{2N}{N-2}, 2} \le k ||Xu||_{2,2}.$ Here $N$ is the homogenous dimension from Proposition  \ref{longprop}(i).

(ii) If in addition we assume $u\in L^{2,\infty}(Q)$  then $u\in L^{2p',2q'}(Q)$ where $(p',q')$ have as H\"older conjugates $(p,q)$ such that $N/2p+1/q\le 1$. Moreover  there exists $k>0$ depending only on $C_D,C_P$ and $diam \ \Om$  such that
$$||u||_{2p',2q'}^2 \le k T^\theta (||w||^2_{2,\infty}+||Xw||_{2,2}^2),$$ where
$\theta=1-1/q - N/2p$.
\end{lemma}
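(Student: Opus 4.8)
The plan is to obtain part (i) directly from the spatial Sobolev embedding \eqref{Sobolev} applied at each frozen time, and then to deduce part (ii) from (i) by interpolating the spaces $L^{2,\infty}(Q)$ and $L^{\frac{2N}{N-2},2}(Q)$ through two applications of H\"older's inequality --- first in the space variable, then in the time variable --- followed by Young's inequality.

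For (i): fix $t\in(0,T)$. The compact support hypothesis gives $u(\cdot,t)\in W^{1,2}_0(\Om)$, so since $N>2$ the Sobolev inequality of Proposition \ref{longprop}(iv) with $p=2$ yields
$$\Big(\int_\Om|u(\cdot,t)|^{\frac{2N}{N-2}}\,dx\Big)^{\frac{N-2}{N}}\le C(C_D,C_P,N)^2\int_\Om|Xu(\cdot,t)|^2\,dx.$$
Integrating this in $t$ over $(0,T)$ and taking square roots gives $\|u\|_{\frac{2N}{N-2},2}\le k\,\|Xu\|_{2,2}$ with $k=k(C_D,C_P,N)$, which is exactly (i); the only point to check, and it is routine, is the measurability of $t\mapsto\|u(\cdot,t)\|_{L^r(\Om)}$.

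For (ii): set $\lambda=\frac{N}{2p}$. The hypothesis $\frac{N}{2p}+\frac1q\le1$ forces $\lambda\in[0,1]$, and an elementary computation shows $\frac{1}{2p'}=\frac{1-\lambda}{2}+\lambda\,\frac{N-2}{2N}$, so that $2\le 2p'\le\frac{2N}{N-2}$ and H\"older's inequality in $x$ gives, for a.e.\ $t$,
$$\|u(\cdot,t)\|_{L^{2p'}(\Om)}\le\|u(\cdot,t)\|_{L^2(\Om)}^{1-\lambda}\,\|u(\cdot,t)\|_{L^{2N/(N-2)}(\Om)}^{\lambda}.$$
Raising this to the power $2q'$, integrating over $(0,T)$, bounding $\|u(\cdot,t)\|_{L^2(\Om)}^{2q'(1-\lambda)}\le\|u\|_{2,\infty}^{2q'(1-\lambda)}$, and applying H\"older in $t$ with exponents $\frac1{q'\lambda}$ and $\frac1{1-q'\lambda}$ --- admissible since $q'\lambda\le1$, i.e.\ $\lambda\le1-\frac1q$, which is again the hypothesis --- produces
$$\|u\|_{2p',2q'}^{2q'}\le\|u\|_{2,\infty}^{2q'(1-\lambda)}\,T^{\,1-q'\lambda}\,\|u\|_{\frac{2N}{N-2},2}^{2q'\lambda}.$$
Taking the $q'$-th root, noting that $\frac{1-q'\lambda}{q'}=\frac1{q'}-\lambda=1-\frac1q-\frac{N}{2p}=\theta$, and using Young's inequality $a^{1-\lambda}b^{\lambda}\le a+b$ for $a,b\ge0$, one obtains $\|u\|_{2p',2q'}^2\le T^{\theta}\big(\|u\|_{2,\infty}^2+\|u\|_{\frac{2N}{N-2},2}^2\big)$; substituting the bound from (i) for the last summand and absorbing constants gives the asserted estimate (the function written $w$ in the statement being this same $u$).

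I do not expect a genuine obstacle here: the whole content is the time-sliced Sobolev inequality of Proposition \ref{longprop}(iv) together with two elementary H\"older estimates and Young's inequality. The only point demanding care is the exponent bookkeeping --- one must check that the single choice $\lambda=N/(2p)$ simultaneously makes the spatial H\"older step legitimate ($2\le 2p'\le 2N/(N-2)$), makes the temporal H\"older step legitimate ($q'\lambda\le1$), and yields precisely the exponent $\theta=1-1/q-N/(2p)$; all three of these reduce to the stated hypothesis $N/(2p)+1/q\le1$. One should also keep track that the constants depend only on $C_D,C_P$ (hence on $N=N(C_D)$) and $\mathrm{diam}\,\Om$, as claimed.
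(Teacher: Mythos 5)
Your proof is correct. The paper states Lemma \ref{lemma2} without giving any proof; your argument---the time-sliced Sobolev inequality of Proposition \ref{longprop}(iv) with $p=2$ for part (i), then for part (ii) the spatial H\"older interpolation between $L^2$ and $L^{2N/(N-2)}$ with parameter $\lambda=N/(2p)$, a temporal H\"older estimate with exponents $1/(q'\lambda)$ and $1/(1-q'\lambda)$, and finally Young's inequality followed by substitution of (i)---is the standard (and essentially forced) route, your exponent bookkeeping $\tfrac{1}{2p'}=\tfrac12-\tfrac{\lambda}{N}$, $q'\lambda\le1$, $\tfrac{1}{q'}-\lambda=\theta$ is exact, and you have correctly read the typographical $w$ in the statement as $u$.
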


We will consider weak solutions of \eqref{equation}, where $A_i$ and $B$ satisfy the following structure conditions: There exist constants $a,\bar a>0$ and functions
$b,c,e,f,h\in L^{p,q}(Q)$ with $p>2$, and $q$ given by  $\frac{N}{2p}+\frac{1}{q}<\frac{1}{2}$ and functions  $d,g\in L^{\al,\beta}(Q)$ with $1<\al$ and $\beta$ given by $\frac{N}{2\al}+\frac{1}{\beta}<1$ such that for a.e. $(x,t)\in Q$ and $\xi\in \R^m$ one has
\begin{eqnarray}\label{structure}
\sum_{i=1}^{{m}} A_i (x,t,u,\xi) \xi_i && \ge a|\xi|^2-b^2 u^2-f^2, \notag\\
|A(x,t,u,\xi)| && \le \bar a |\xi |+e|u|+h,\\
|B(x,t,u,\xi)| && \le c|\xi|+d|u|+g. \notag
\end{eqnarray}
In view of the conditions on $p,q,\al,\beta$ there exists  $\theta>0$ such that
\begin{eqnarray}\label{theta}
p\ge \frac{2}{1-\theta} && \text{ and } \frac{{N}}{2p}+\frac{1}{q}\le \frac{1-\theta}{2} \notag \\
\al \ge \frac{1}{1-\theta}  &&\text{ and } \frac{{N}}{2\alpha}+\frac{1}{\beta} \le 1-\theta.
\end{eqnarray}
In this note when we say that {\it a constant depends on the structure conditions \eqref{structure}}, if it depends only on \footnote{ The $||\cdot||$ norms are in the appropriate  $L^{p,q}$ or $L^{\al, \beta}$ classes}
$$a,\bar a, ||b||, ||c||,||d|| ,||e||, ||f||,||g||,||h||, N, \theta,$$ and is uniformly bounded if these quantities are so.
Here $N$ denotes  the homogenous dimension defined in Proposition \ref{longprop}(i).

 A function $u\in L^{2,\infty}(Q)$ with $Xu\in L^{2,2}(Q)$ is a {\it weak solution} of \eqref{equation}  in $\Om$ if
\begin{equation}\label{weak}
\int\int_Q -u\phi_t+X_i\phi A_i(x,t,u(x,t),Xu(x,t)) dx dt = \int\int_Q \phi(x,t) B(x,t,u(x,t),Xu(x,t)) dx dt,
\end{equation}
for every $\phi \in C^{\infty}_0(Q)$. To deal with the lack of differentiability of weak solutions along the time variable we will make use of Steklov average: For  $(x,t)\in Q$ and $h>0$ sufficiently small set
$u_h(x,t)=\frac{1}{h}\int_0^h u(x,t+s)ds$. Changing variables $t\to t+s$ and integrating in $s$ in  \eqref{weak} leads to
\begin{multline}\label{weak1}
\int\int_Q  \bigg( \p_t (u_h)\phi +X_i\phi h^{-1} \int_0^h A_i\big(x,t-s,u(x,t-s),Xu(x,t-s)\big) ds\bigg) dx dt \\ = \int\int_Q \phi(x,t) h^{-1} \int_0^h B\big(x,t-s,u(x,t-s),Xu(x,t-s)\big) dx dt,
\end{multline}
for every $\phi \in C^{\infty}_0(Q)$. Clearly the identity extends to the larger class of test functions with weak derivatives in $L^{2,2}$ and with vanishing trace.

\begin{lemma}\label{fundamental}
Let $u$ be a weak solution of \eqref{equation} in $Q=\Om\times (0,T)$ and  $\kappa$ a positive constant.
 For every $\eta\in C^{\infty}(Q)$ vanishing in a neighborhood of the parabolic boundary $\p_p Q$,  one has

\begin{itemize}
\item If $\beta\ge 1$, $\bar u=\max(0,u)+\kappa$ and $0<\tau <T$, then
\begin{multline}\label{I}
\frac{1}{\beta+1} \int_\Om \eta^2\bigg[ \bar{u}^{\beta+1}-(\beta+1)\kappa^\beta \bar u
+\beta \kappa^{\beta+1}\bigg]_{t=\tau} dx
+\frac{a\beta}{2} \int_{0}^{\tau} \int_\Om \eta^2 \bar u^{\beta+1} |X\bar u |^2 dxdt \\
 \le  \int_{0}^{\tau} \int_\Om \mathcal F\bar u^{\beta+1}dx dt +\frac{2}{\beta+1}  \int_{0}^{\tau} \int_\Om \eta |\p_t \eta| \bar u^{\beta+1} dx dt,
\end{multline}
where $\mathcal F=F\eta^2+2G\eta|X\eta|+H|X\eta|^2$, and
$$F=\beta \bigg(b^2+\frac{f^2}{\kappa^2}\bigg)+\bigg(d+\frac{g}{\kappa}\bigg)+\frac{c^2}{a}, \quad
G=e+\frac{h}{\kappa}, \text{ and }H=\frac{4\bar a^2}{a}.$$

\item  If $\beta\ge 1$, $\bar u=\max(0,u)+\kappa$ and $0<t_1<t_2<T$ then
\begin{multline}\label{II}
\frac{1}{\beta+1} \int_\Om \eta^2\bigg[ \bar u^{\beta+1}-(\beta+1)\kappa^\beta \bar u
 +\beta \kappa^{\beta+1}\bigg]_{t_1}^{t_2} dx
+\frac{a\beta}{2} \int_{t_1}^{t_2} \int_\Om \eta^2 \bar u^{\beta+1} |X\bar u |^2 dxdt \\
 \le  \int_{t_1}^{t_2} \int_\Om \mathcal F\bar u^{\beta+1}dx dt +\frac{2}{\beta+1}  \int_{t_1}^{t_2} \int_\Om \eta |\p_t \eta| \bar u^{\beta+1} dx dt,
\end{multline}
with $\mathcal F$ defined as above.

\item If $u\ge 0$ and for arbitrary $\e>0$ we let $\bar u =u+\kappa+\e$, $\beta\neq 0$ and  $0<t_1<t_2<T$ then
 \begin{multline}\label{III}
\text{sign}(\beta) \bigg(\int_\Om \eta^2 \big[\mathcal H(u)\big]_{t_1}^{t_2} dx +\frac{a\beta}{2}  \int_{t_1}^{t_2} \int_\Om
\eta^2\bar u^{\beta-1} |X\bar u|^2 dx dt \bigg) \\
\le  \int_{t_1}^{t_2} \int_\Om  \mathcal F_1 \bar u^{\beta+1} dx dt + 2  \int_{t_1}^{t_2} \int_\Om  \eta |\p_t \eta| |\mathcal H(u)| dx dt,
\end{multline}
where \begin{equation}\label{19}
\mathcal H(\bar u)=\Bigg\{ \begin{array}{ll}
\frac{1}{\beta+1} \bar u^{\beta+1} & \text{ if }\beta \neq -1,\\
\log \bar u &\text{ if }\beta=-1,
\end{array}
\end{equation}
\begin{equation}\label{F_1}
\mathcal F_1=F_1\eta^2+2G_1\eta|X\eta|+H_1|X\eta|^2,
\end{equation}
with $$F_1=|\beta| \bigg(b^2+\frac{f^2}{\kappa^2}\bigg)+\bigg(d+\frac{g}{\kappa}\bigg)+\frac{c^2}{a|\beta|}, \quad
G_1=e+\frac{h}{\kappa}, \text{ and }H_1=\frac{4\bar a^2}{a|\beta|}.$$

\end{itemize}
\end{lemma}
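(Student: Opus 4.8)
The plan is to derive all three energy inequalities from a single source: the weak formulation \eqref{weak1} in Steklov-averaged form, tested against functions of the type $\phi = \eta^2 \Phi(\bar u)$ where $\Phi$ is chosen differently in each of the three cases. First I would fix $h>0$ small, so that $\phi$ has the regularity required by the remark following \eqref{weak1} (weak $L^{2,2}$ derivatives, vanishing trace — guaranteed since $\eta$ vanishes near $\p_p Q$ and $\bar u$ is controlled), plug it into \eqref{weak1}, carry out the differentiations $X_i \phi = 2\eta (X_i\eta)\Phi(\bar u) + \eta^2 \Phi'(\bar u) X_i \bar u$, and then let $h\to 0$ using that $u_h \to u$ and $A_i(\cdot,t-s,\cdots)\to A_i(\cdot,t,\cdots)$ in the appropriate topologies. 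The term $\int\int \p_t(u_h)\,\eta^2\Phi(\bar u)$ becomes, after integrating by parts in $t$ and using $\p_t \bar u = \p_t u$ on the set $\{u>0\}$ (resp.\ everywhere in case \eqref{III}), a boundary-in-time term involving the antiderivative of $s\mapsto s\,\Phi(s)$ — this is exactly the bracket $[\,\bar u^{\beta+1} - (\beta+1)\kappa^\beta \bar u + \beta\kappa^{\beta+1}\,]$ in \eqref{I}, \eqref{II} (with $\Phi(s)=s^\beta$, so that the primitive of $s\Phi(s)=s^{\beta+1}$ generates it once the lower endpoint $\bar u = \kappa$ contribution is subtracted) and the $\mathcal H(u)$ term in \eqref{III} — plus a remainder $-\,2\int\int \eta\,\p_t\eta\,(\text{primitive})$, which after estimating $|\text{primitive}|\le C\bar u^{\beta+1}$ (resp.\ $|\mathcal H(u)|$) yields the last term on the right-hand side of each inequality.

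Next I would handle the elliptic part. The diagonal term $\int\int \eta^2 \Phi'(\bar u) \sum_i A_i X_i \bar u$ is bounded below using the first structure condition in \eqref{structure}: $\sum_i A_i \xi_i \ge a|\xi|^2 - b^2 u^2 - f^2$, which after multiplying by $\Phi'(\bar u)=\beta\bar u^{\beta-1}$ (positive when $\beta>0$; in case \eqref{III} one multiplies through by $\operatorname{sign}(\beta)$ to fix the orientation) produces the good term $\tfrac{a\beta}{2}\int\int \eta^2 \bar u^{\beta\pm1}|X\bar u|^2$ (the factor $1/2$ coming from absorbing half of it against cross terms below) together with bad terms $\beta b^2 u^2 \bar u^{\beta-1}$, $\beta f^2 \bar u^{\beta-1}$, which one bounds by $\beta(b^2 + f^2/\kappa^2)\bar u^{\beta+1}$ since $u\le\bar u$ and $\kappa\le\bar u$. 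The cross term $2\int\int \eta(X_i\eta)\Phi(\bar u) A_i$ is estimated by the growth bound $|A|\le \bar a|\xi| + e|u| + h$ and Cauchy–Schwarz: the $\bar a|X\bar u|$ piece is split as (Young's inequality with a small parameter) part of the good term plus $\tfrac{4\bar a^2}{a}|X\eta|^2\bar u^{\beta+1}$ — that is $H|X\eta|^2$ — and the lower-order pieces $e|u|$, $h$ give $2(e + h/\kappa)\eta|X\eta|\bar u^{\beta+1} = 2G\eta|X\eta|\bar u^{\beta+1}$. Finally the right-hand side $\int\int \eta^2 \Phi(\bar u) B$ is treated by $|B|\le c|\xi| + d|u| + g$: the $c|X\bar u|$ term again partly absorbed into the good term, leaving $\tfrac{c^2}{a}\bar u^{\beta+1}\eta^2$ (or $\tfrac{c^2}{a|\beta|}$ in \eqref{III}), and $d|u| + g$ giving $(d + g/\kappa)\bar u^{\beta+1}\eta^2$. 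Collecting the coefficients of $\eta^2\bar u^{\beta+1}$ into $F$ (resp.\ $F_1$) and of $\eta|X\eta|\bar u^{\beta+1}$, $|X\eta|^2\bar u^{\beta+1}$ into $G,H$ (resp.\ $G_1,H_1$) gives precisely $\mathcal F$ and $\mathcal F_1$ as defined in the statement, and \eqref{I} follows with $\tau=t_2$, $0$ in place of $t_1$; \eqref{II} is the same computation on $[t_1,t_2]$; \eqref{III} is the $\beta\ne 0$ (possibly negative) version, where the only change is carrying $\operatorname{sign}(\beta)$ and noting that for $\beta=-1$ the primitive of $s\Phi'(s) = -s^{-2}\cdot s = -s^{-1}$ integrates to produce $\mathcal H = \log\bar u$.

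The main obstacle I expect is the rigorous justification of the time-derivative manipulation — passing from the Steklov-averaged identity \eqref{weak1} to a clean statement with the bracket $[\cdots]_{t_1}^{t_2}$ — since $u$ itself has no time derivative and $\bar u^\beta$ is only a composition; one must carefully commute the Steklov average with the nonlinearity $\Phi$, use the chain rule $\p_t \mathcal H(\bar u_h) = \bar u_h^\beta \p_t \bar u_h$ valid for the regularized function, integrate by parts in $t$ while the test function is still $\eta^2 \bar u_h^\beta$ (not $\eta^2\bar u^\beta$), and only then let $h\to0$, checking that $\int\int \eta|\p_t\eta|\,(\text{primitive of }\bar u_h) \to$ the corresponding limit and that the boundary terms converge for a.e.\ choice of $t_1,t_2$ (Lebesgue points). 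A secondary technical point is the sign bookkeeping in \eqref{III} when $-1<\beta<0$ or $\beta<-1$, where $\Phi'(\bar u) = \beta\bar u^{\beta-1}$ changes sign relative to the $\beta>0$ case and one must verify that multiplying by $\operatorname{sign}(\beta)$ restores the correct direction of every inequality, in particular that the good term $\tfrac{a\beta}{2}\int\int\eta^2\bar u^{\beta-1}|X\bar u|^2$ becomes $\tfrac{a|\beta|}{2}(\cdots)$ after that multiplication; this is routine but must be done with care because the whole Moser iteration later depends on the precise constants $F_1, G_1, H_1$.
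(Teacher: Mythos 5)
Your proposal follows essentially the same route as the paper: plug $\phi=\eta^2\cdot(\text{power of }\bar u)$ into the Steklov-averaged identity \eqref{weak1}, integrate by parts in $t$ to produce the bracket, apply the structure conditions with Young's inequality to collect $F,G,H$ (resp.\ $F_1,G_1,H_1$), and pass to the limit $h\to0$. Your account of the elliptic part is in fact more detailed than the paper's (which simply defers to Aronson--Serrin for those estimates), and you correctly flag the time-derivative bookkeeping as the main obstacle.

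There is, however, one genuine gap. You test with $\eta^2\Phi(\bar u)$, $\Phi(s)=s^\beta$, and assert admissibility because ``$\bar u$ is controlled.'' It is not: a weak solution only has $u\in L^{2,\infty}$ and $Xu\in L^{2,2}$, so for $\beta>1$ the factor $\bar u^{\beta-1}X\bar u$ in $X\phi$ is not a priori in $L^{2,2}$, and the weak formulation cannot be invoked with this test function. The paper avoids this by testing instead with
$\phi=\eta^2\,\mathcal G(u_h)\,\chi_{(0,\tau)}$, where $\mathcal G(u)=\bar u^\beta-\kappa^\beta$ for $u\le l-\kappa$ and $\mathcal G(u)=l^{\beta-1}\bar u-\kappa^\beta$ for $u\ge l-\kappa$, i.e.\ a globally Lipschitz truncation of $\bar u^\beta-\kappa^\beta$ that is linear above level $l$. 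This makes $\mathcal G'$ bounded, hence $X\phi\in L^{2,2}$ and the test function admissible; the desired inequality is first obtained with the truncation and then $l\to\infty$ is taken (monotonically) after $h\to0$. This device is indispensable for $\beta>1$, and precisely these exponents are needed in the subsequent Moser iteration, so the omission is not cosmetic.

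Two smaller points. First, the boundary-in-time term is the antiderivative of $\mathcal G$ itself (composed with $u_h$), not of $s\Phi(s)$: after integrating by parts one gets $\p_t\mathcal H_1(u_h)$ with $\mathcal H_1'=\mathcal G$, and the $-\kappa^\beta$ in $\mathcal G$ is exactly what makes $\mathcal H_1$ vanish on $\{u\le 0\}$ and produces the additive constant $\beta\kappa^{\beta+1}$ in the bracket. Second, the cut in time via $\chi_{(0,\tau)}$ (or $\chi_{(t_1,t_2)}$) must be built into the test function, since \eqref{weak1} is stated for test functions supported in $Q$; you allude to Lebesgue points, which is the right idea, but the truncation in time should appear explicitly inside $\phi$.
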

\begin{proof}
The argument follows closely \cite[Section 2]{MR0244638} with Steklov averages in place of the elegant convolution argument in Aronson and Serrin's original paper, as the latter seems more difficult to extend to the case of variable coefficients vector fields. Let
$$\mathcal G(u)=\bigg\{ \begin{array}{ll} \bar u^\beta-\kappa^{\beta} &\text{ for }-\infty <u\le l-\kappa, \\
 l^{\beta-1}\bar u -\kappa^\beta &\text{ for }l-\kappa\le u <\infty,\end{array}$$
where $l\ge \kappa$.
 Denote by $\mathcal H_1$  a $C^1$  function such that $\mathcal H_1'(s)=\mathcal G(s)$ for all $s\in \R$. In particular we can assume that $\mathcal H_1$ coincides with the integrand function of the lhs of \eqref{I} for $s < l-\kappa$.
 Choose $0<\tau<T$ as in \eqref{I} and denote by $\chi_{(0,\tau)}$ the characteristic function of the interval $(0,\tau)$. Let  $\eta\in C^{\infty}(Q)$ vanishing in a neighborhood of $\p_p Q$ and set
$\phi(x,t)=\eta^2(x,t) \mathcal G(u_h(x,t) )\chi_{(0,\tau)}(t)$ in \eqref{weak1},  to obtain
\begin{multline} \int_\Om \eta^2 \bigg[\mathcal H_1(u_h) dx \bigg]_{t=\tau} - \int_0^\tau \int_\Om 2\eta \p_t \eta \mathcal H_1(u_h) dx dt +
\\   \int_0^\tau \int_\Om X_i\phi h^{-1} \int_0^h A_i\bigg(x,t-s,u(x,t-s),Xu(x,t-s)\bigg) ds  dx dt \\ = \int\int_Q \phi(x,t) h^{-1} \int_0^h B\bigg(x,t-s,u(x,t-s),Xu(x,t-s)\bigg) dx dt.
\end{multline}
At this point we can let $h\to 0$. In the LHS we let $l \to \infty$  so that $\mathcal H_1$ reduces to the integrand
function of (3.6). In the right hand side we  make use of the structure conditions to conclude \eqref{I} in the same fashion as in \cite{MR0244638}. The estimates \eqref{II} and \eqref{III} are proved in the same way.
\end{proof}

\subsection{Proof of the maximum principle.}
In this section we prove the maximum principle Theorem \ref{T:max} . We also note that using a similar argument one obtains a corresponding minimum principle for weak solutions.

\begin{dfn}\label{norma3barre}
 Let $|||w|||=\sup_{p,q} ||w||_{p,q}$ where the sup is taken over all the pairs $(p,q)$ satisfying
$n/2p+1/q\le 1-\theta$ and $p\ge 1/(1-\theta)$, where $\theta$ has been defined in  \eqref{theta}.
\end{dfn}

The proof of Theorem \ref{T:max} is accomplished in two steps as described in Lemma \ref{step1} and Lemma \ref{step2}. We remark that without loss of generality one can assume  $M<0$. The general case can then be easily derived by substituting $u$ with $u-M-\delta$ for an arbitrary $\delta>0$ and invoking the structure conditions \eqref{structure}.

\begin{lemma}\label{step1} Let $M$ be the constant defined in Theorem \ref{T:max}. If  $M<0$ then  there exists a positive constant $C_1$ depending\footnote{The same dependance holds for the constants $C_2,C_3,...$ used in the proof} only on $C_D,C_L,C_P$ and the structure conditions \eqref{structure} such that the function
$\tilde u=\max(0,u)$ satisfies
\begin{equation}\label{20}
u(x,t)\le C_1 (||\tilde u||_{2,\infty}+||X\tilde u||_{2,2}+\kappa),
\end{equation}
for a.e.  $(x,t)\in Q$ and with $\kappa=||f||+||g||$.
\end{lemma}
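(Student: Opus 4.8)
The plan is to run a Moser-type iteration on the energy estimate \eqref{II} from Lemma \ref{fundamental}, applied to the truncated function $\bar u = \tilde u + \kappa = \max(0,u)+\kappa$. Since we may assume $M < 0$, the function $\bar u$ is comparable to $\tilde u + \kappa$ on its support and the parabolic-boundary hypothesis guarantees that a cutoff $\eta$ vanishing near $\p_p Q$ can be chosen with $\eta \equiv 1$ on large subcylinders, using Proposition \ref{longprop}(ii) for the spatial part and an elementary choice in the time variable. First I would fix $\beta = 1$ (no iteration yet) in \eqref{II}: this gives control of $\sup_t \int_\Om \eta^2 \bar u^{2}\,dx$ and of $\int\!\!\int \eta^2 \bar u^{2}|X\bar u|^2$, hence of $\|\eta \bar u\|_{2,\infty}^2 + \|X(\eta\bar u)\|_{2,2}^2$, in terms of $\int\!\!\int \mathcal F \bar u^{2}$ plus the $\eta|\p_t\eta|\bar u^{2}$ term. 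The crucial point is that every coefficient appearing in $\mathcal F$ (namely $b^2, f^2/\kappa^2, d, g/\kappa, c^2/a, e, h/\kappa$) lies in an $L^{p,q}$ (or $L^{\al,\be}$) class with the subcritical exponents of \eqref{structure}, so by Lemma \ref{lemma2}(ii) the term $\int\!\!\int \mathcal F \bar u^{\beta+1}$ is estimated by $\|\mathcal F\|_{\text{crit}}\, T^\theta (\|\eta\bar u\|_{2,\infty}^2 + \|X(\eta\bar u)\|_{2,2}^2)$ with a small factor $T^\theta$ — which, after shrinking the cylinder (or iterating in time over short slabs), can be absorbed on the left-hand side.

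Once the base estimate is closed, I would iterate over the exponent: replacing $\beta+1$ by a sequence $\chi^j \cdot 2$ with $\chi = N/(N-2) > 1$, the Sobolev embedding Lemma \ref{lemma2}(i) upgrades $\|\eta\bar u^{(\beta+1)/2}\|_{2,2}^2 \le k\|X(\eta \bar u^{(\beta+1)/2})\|_{2,2}^2$ into a gain in integrability exponent $\chi$, while the interpolation bound Lemma \ref{lemma2}(ii) handles the lower-order $\mathcal F$ terms at each stage with a constant that grows only polynomially in $\beta$. The standard Moser bookkeeping — taking $(\beta_j+1)/2 = \chi^j$, tracking the products of constants $\prod_j (C\beta_j)^{1/\beta_j} < \infty$, and a sequence of nested cutoffs between a large cylinder and a slightly smaller one absorbing the $\eta|\p_t\eta|$ term — yields $\|\bar u\|_{L^\infty(\text{inner cylinder})} \le C(\|\bar u\|_{2,\infty} + \|X\bar u\|_{2,2})$ with $C$ depending only on $C_D, C_L, C_P, N, \theta$ and the structure constants. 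Unwinding $\bar u = \tilde u + \kappa$ and observing $u \le \tilde u \le \bar u$ gives \eqref{20} with $\kappa = \|f\| + \|g\|$ (the remaining coefficients $b,c,d,e,h$ enter only through the structure-dependent constant $C_1$, as is consistent with the statement since $M<0$ makes the $|u|$-coefficient terms in \eqref{structure} controllable by $\tilde u$ itself).

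The main obstacle I anticipate is \emph{not} the iteration scheme per se — that is classical once Lemma \ref{lemma2} is in hand — but rather handling the time variable carefully: the estimate \eqref{II} is stated on a time interval $[t_1,t_2]$, the cutoff $\eta$ must vanish near $t=0$, and to reach an $L^\infty$ bound valid up to $t=T$ one needs to run the Moser iteration on a telescoping family of time-expanding cylinders so that the $\p_t\eta$ contributions stay summable and the sup-in-time norm from an earlier stage feeds the space-time integral at the next. A second delicate point is keeping the constants uniform: one must check that the absorption of $T^\theta \|\mathcal F\|\,(\cdots)$ does not force $T$ to be small in a way that depends on the solution, only on the fixed data — this is where the subcriticality $\frac{N}{2p}+\frac1q < \frac12$ is essential, as it produces the strictly positive exponent $\theta$ and hence a genuinely absorbable power of the diameter. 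Everything else (the precise form of $\mathcal F$, the role of $\kappa$ in keeping $\bar u \ge \kappa > 0$ so that negative powers in \eqref{III} are unnecessary here) follows the template of \cite[Section 3]{MR0244638} line by line, with Steklov averages already incorporated into Lemma \ref{fundamental}.
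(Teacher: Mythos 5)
There is a genuine gap, and it hinges on the \emph{purpose} of the hypothesis $M<0$. You read that hypothesis as licensing a choice of cutoff $\eta$ vanishing near $\p_p Q$ and then plan a standard Moser iteration over a telescoping family of nested space-time cylinders, shrinking both in space and in time, with the $\eta|\p_t\eta|$ term absorbed at each stage. That scheme produces an \emph{interior} estimate: the $L^\infty$ bound on some inner cylinder in terms of norms on the outer cylinder. But the Lemma asserts $u(x,t)\le C_1(\|\tilde u\|_{2,\infty}+\|X\tilde u\|_{2,2}+\kappa)$ for a.e.\ $(x,t)$ in \emph{all of} $Q$, up to the parabolic boundary; your nested-cylinder iteration does not reach the boundary and no amount of telescoping in time fixes the spatial boundary deficit.

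The point you are missing is the one the paper actually uses: because $u\le M+\delta<0$ in a neighborhood of $\p_p Q$, the truncation $\tilde u=\max(0,u)$ vanishes identically near $\p_p Q$, hence the Aronson--Serrin-type test function $\mathcal G(u_h)$ from the proof of Lemma \ref{fundamental} already has vanishing trace there. Consequently \eqref{I} can be applied with $\eta\equiv 1$ and \emph{no cutoff at all}, giving the cutoff-free inequality \eqref{21}. The iteration in the exponent of $v=\bar u^{\beta_1}$ is then \emph{global} on $Q$: one lets $\tau\to T$, bounds $a\|Xv\|_{2,2}^2$ and $\|v\|_{2,\infty}^2$ by $C\beta_1^2|||v|||^2$ via the structure conditions and Young's inequality, and closes the loop with Lemma \ref{lemma2} to get $|||v|||^\sigma\le C_4\beta_1^2|||v|||^2$ with $\sigma=1+2\theta/N>1$, which iterates in $\beta$ to an $L^\infty$ bound valid a.e.\ on $Q$. (Note also that the paper's gain per step is $\sigma=1+2\theta/N$ coming from the interpolation norm $|||\cdot|||$, not the Sobolev factor $N/(N-2)$ you propose; both exceed $1$, but it is the $|||\cdot|||$-formulation that lets the lower-order $\mathcal F$-terms be swallowed uniformly in $\beta$ without a separate absorption-of-$T^\theta$ step at each stage.) Your instinct that the time variable is "the main obstacle" is therefore misplaced for this particular Lemma: the whole difficulty evaporates once one uses that $\tilde u$ itself is the cutoff, and the telescoping-in-time machinery you outline is both unnecessary and, as written, insufficient to cover $Q$ up to its parabolic boundary.
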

\begin{proof} As in proof of Lemma \ref{fundamental} we set $\bar u=\tilde u+\kappa$ and note that since $\mathcal G(\tilde u)$ vanishes in a neighborhood of the parabolic boundary of $Q$ then \eqref{I} holds with no need of the cut-off function $\eta$,
\begin{multline}\label{21}
\frac{1}{\beta+1}\int_\Om \bigg[ \bar{u}^{\beta+1}-(\beta+1)k^\beta \bar u
+\beta k^{\beta+1}\bigg]_{t=\tau} dx
+\frac{a\beta}{2} \int_{0}^{\tau} \int_\Om \bar u^{\beta+1} |X\bar u |^2 dxdt \\
 \le  \int_{0}^{\tau} \int_\Om  F\bar u^{\beta+1}dx dt \end{multline}
where $0<\tau<T$ and
$F=\beta \bigg(b^2+\frac{f^2}{\kappa^2}\bigg)+\bigg(d+\frac{g}{\kappa}\bigg)+\frac{c^2}{a}.$
Set $\beta_1=(\beta+1)/2$ and $v=\bar u^{\beta_1}$. Letting $\tau\to T$ (the height of the parabolic cylinder)  in \eqref{21} one obtains $a||Xv||^2_{2,2}\le 2\frac{\beta_1^2}{\beta} ||Fv^2||_{1,1}.$ In view of the structure conditions \eqref{structure} one has that there exists $C_2>0$  such that $\beta^{-1}||Fv^2||_{1,1}\le C_2 |||v|||^2,$ and consequently
\begin{equation}\label{24}
a||Xv||^2_{2,2}\le C_2 \beta_1^2|||v|||^2.
\end{equation}
Young inequality and \eqref{21} yield $||v||^2_{2,\infty}\le C_3 \beta_1^2|||v|||^2$ for some $C_3>0$.
The latter, Lemma \ref{lemma2} and \eqref{24} imply
\begin{equation}\label{26}
|||v|||^\sigma\le 2(k+1) ( ||v||^2_{2,\infty} +||Xv||_{2,2}^2) \le C_4 \beta_1^2 |||v|||^2,
\end{equation}
with $\sigma=1+2\theta/N$. A standard iteration argument\footnote{See \cite[page 95]{MR0244638} for details} in the exponent of $v$ leads to \eqref{20}.
\end{proof}

The second step concludes the proof of the maximum principle.

\begin{lemma}\label{step2}
 If the constant $M$ defined in Theorem \ref{T:max} is negative, then  there exists a positive constant $C$ depending\ only on $C_D,C_L,C_P$ and the structure conditions \eqref{structure} such that the function
$\tilde u=\max(0,u)$ satisfies
\begin{equation}\label{32}
 ||\tilde u||^2_{2,\infty} \le C\kappa^2 \text{ and } ||X\tilde u||^2_{2,2} \le C\kappa^2,
\end{equation}
 with $\kappa=||f||+||g||$.
\end{lemma}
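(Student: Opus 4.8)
The plan is to close the loop opened by Lemma~\ref{step1}: that lemma bounds $u$ pointwise by a constant multiple of $\|\tilde u\|_{2,\infty}+\|X\tilde u\|_{2,2}+\kappa$, so it suffices to produce an \emph{a priori} estimate of the two energy quantities $\|\tilde u\|_{2,\infty}^2$ and $\|X\tilde u\|_{2,2}^2$ purely in terms of $\kappa^2$. The natural tool is the energy inequality \eqref{I} of Lemma~\ref{fundamental} in the simplest case $\beta=1$ (so that $\bar u=\tilde u+\kappa$ and the integrand on the left is comparable to $\bar u^2$), again with no cut-off function $\eta$ needed since $\mathcal G(\tilde u)$ vanishes near $\p_p Q$ (recall $M<0$). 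Sending the time slice $\tau\to T$, \eqref{I} gives a bound of the form
\[
\|\bar u\|_{2,\infty}^2 + a\|X\bar u\|_{2,2}^2 \le C\int\!\!\int_Q F\,\bar u^{2}\,dx\,dt,
\]
with $F=b^2+f^2/\kappa^2+(d+g/\kappa)+c^2/a$ as in \eqref{structure}; the point of the $\kappa=\|f\|+\|g\|$ normalization is exactly that the ``inhomogeneous'' pieces $f^2/\kappa^2$ and $g/\kappa$ are then controlled, so $F$ has $L^{p,q}$ (resp.\ $L^{\al,\be}$) norm bounded by a structural constant.

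Next I would estimate the right-hand side $\|F\bar u^2\|_{1,1}$ by splitting $F$ into its $L^{p,q}$ part and its $L^{\al,\be}$ part and applying H\"older's inequality together with the interpolation inequality Lemma~\ref{lemma2}(ii). Concretely, $\|F\bar u^2\|_{1,1}\le \|F\|_{p,q}\,\|\bar u^2\|_{p',q'}=\|F\|_{p,q}\,\|\bar u\|_{2p',2q'}^2$ (and similarly for the $\al,\be$ term), and Lemma~\ref{lemma2}(ii) bounds $\|\bar u\|_{2p',2q'}^2$ by $k\,T^\theta(\|\bar u\|_{2,\infty}^2+\|X\bar u\|_{2,2}^2)$ with $\theta>0$ from \eqref{theta}. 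Plugging this back, one arrives at
\[
\|\bar u\|_{2,\infty}^2 + a\|X\bar u\|_{2,2}^2 \le C\,T^\theta\bigl(\|\bar u\|_{2,\infty}^2+\|X\bar u\|_{2,2}^2\bigr).
\]
This is only useful on a cylinder of small height: choosing $T$ so that $C\,T^\theta\le 1/2$ absorbs the right-hand side into the left and yields $\|\bar u\|_{2,\infty}^2+\|X\bar u\|_{2,2}^2\le 0$ modulo the constant contributions $\|\bar u\|$ picks up from the terms $\bar u^{2}$ versus the lower-order pieces $(\beta+1)\kappa^\beta\bar u$ and $\beta\kappa^{\beta+1}$ on the left of \eqref{I}, which are themselves $O(\kappa^2\,|Q|)$ after another application of Young's inequality. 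Tracking those terms gives $\|\tilde u\|_{2,\infty}^2\le C\kappa^2$ and $\|X\tilde u\|_{2,2}^2\le C\kappa^2$ on this short cylinder, which is \eqref{32}; the constant $C$ depends only on $C_D,C_L,C_P$ and the structure constants, since everything fed into it (the constant in \eqref{I}, $k$ in Lemma~\ref{lemma2}, $\|F\|_{p,q}$, $\|F\|_{\al,\be}$, $a$) does.

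For a cylinder $Q$ of arbitrary height $T$ one finishes by a standard finite iteration in the time variable: partition $(0,T)$ into $\lceil T/T_0\rceil$ subintervals each of length $\le T_0$ (the small height just determined), apply the estimate on the first slab using that $\tilde u$ is controlled on $\Om\times\{t=0\}$ by the hypothesis $u\le M+\delta<\delta$, then propagate the resulting bound on $\tilde u(\cdot,t)$ at the top of each slab as ``initial data'' for the next, using \eqref{II} in place of \eqref{I} on the interior slabs. Since the number of steps depends only on $T$ and $T_0$ — hence only on $T$ and the structural data — the final constant retains the claimed dependence. Combining \eqref{32} with \eqref{20} of Lemma~\ref{step1} gives $u(x,t)\le C(\kappa+\kappa)=C\kappa$ a.e., which is \eqref{max} in the reduced case $M<0$; the general statement follows by the translation $u\mapsto u-M-\delta$ and letting $\delta\to0$, as already noted before Lemma~\ref{step1}. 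The main obstacle, I expect, is the bookkeeping in the absorption step: one must be careful that the ``small-$T$'' trick only eats the top-order term $\bar u^2$ and that the genuinely inhomogeneous remainder is correctly identified as $O(\kappa^2)$ rather than accidentally circular — i.e.\ that after normalizing by $\kappa$ every surviving integral is either absorbed or bounded by a structural constant times $\kappa^2|Q|$.
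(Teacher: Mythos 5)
Your proposal is essentially the paper's own proof. You both set $\beta=1$ in the energy inequality of Lemma~\ref{fundamental} with the cut-off $\eta$ dropped (legal because $M<0$ makes $\mathcal G(\tilde u)$ vanish near $\partial_p Q$), both estimate $\|F\bar u^2\|_{1,1}$ via H\"older plus the interpolation Lemma~\ref{lemma2}(ii) to produce a factor $(t_2-t_1)^\theta$, both absorb on a time interval of small enough height $\mu$, and both conclude with a finite iteration over consecutive time slabs of length $\mu$. The only cosmetic differences from the paper are that it works directly with~\eqref{II} on $[t_1,t_2]$ throughout (rather than starting from~\eqref{I} and then switching), it tracks the absorption by introducing $Y(t)=\int_\Omega\tilde u^2(\cdot,t)\,dx$ and iterating $Y(t_1+\mu)\le\kappa^2/2+Y(t_1)$, and it uses that when $\beta=1$ the bracket on the left of the energy inequality collapses exactly to $\tilde u^2$, so the ``lower-order pieces'' you mention needing Young for are not actually present.
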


\begin{proof}
Let $\beta=1$ in \eqref{II}, with the cut-off function $\eta$ omitted, and apply H\"older inequality and the interpolation in Lemma \ref{lemma2}(ii) to obtain
\begin{multline}\label{28}
\frac{1}{2} \int_\Om \bigg[\tilde u^2\bigg]_{t_1}^{t_2} dx + \frac{a}{2}\int_{t_1}^{t_2} \int_\Om |X\tilde u|^2 dx dt
\\
\le \int_{t_1}^{t_2} \int_\Om F\bar u^2 dx dt
\\ \le 3(k+1) C(t_2-t_1)^\theta (||\tilde u||^2_{2,\infty}+||X\tilde u||^2_{2,2}+\kappa^2),
\end{multline}
with $\theta$ as in \eqref{theta} and for a constant $C$ as in the statement of this lemma.
Next, we let {$3(k+1) C\mu^\theta <\min(a,1)/4$}, choose $t \in ]t_1, {t_1+\mu}[$ and set $Y(t)=\int_{\Om} \tilde u^2(x,t)dx$. Estimate \eqref{28} yields
\begin{equation}\label{29}
Y(t)+{\frac{a}{4} }\int_{t_1}^t \int_\Om |X\tilde u|^2 dx dt \le {\frac{\min{(a,1)}}{2}} (||\tilde u||_{2,\infty}+\kappa^2) + Y(t_1).
\end{equation}
This implies  $Y(t_1+\mu) \leq \kappa^2/2 + Y(t_1)$. Iterating this inequality from one time interval to the next yields $Y(t) \le 2 ^{1+t\mu^{-1}}\kappa^2$ which for $t=T$
gives the first inequality in \eqref{32}. The second inequality follows immediately from the latter and from \eqref{28}.
\end{proof}

%
%

\subsection{Proof of the Harnack inequality} In this section we prove Theorem \ref{T:harnack}.
 Let $Q_r,Q^{\pm}_r,D^{\pm}_r$ denote the cylinders defined in \eqref{cylinders}.

\begin{lemma}\label{supinf}
Let $u\ge 0$ be a weak solution of \eqref{equation} in $Q=\Om\times (0,T)$, $\theta$ be as in \eqref{theta}, $N$ the homogenous dimension (Proposition \ref{longprop}(i)) and $0<r<20R$ with $R$ as in \eqref{doubling}. There exist a constant  $C>0$ depending on $diam(\Om)$,$C_D,C_P,C_L$ and on the structure constants \eqref{structure} such that for every $Q_{2r}(x_0,t_0)\subset Q$ and every exponent of the form $\beta_0=(1+2\frac{\theta}{N})^{-h}(2+2\frac{\theta}{N})^{-1}$, $h=1,2,...$
one has
\begin{equation}\label{sup}
\text{ esssup}_{  D^-_{\frac{r}{2}} }
|\bar u^{\beta_0}|
\le
C  |||\bar u^{\beta_0}||_{Q^-_{\frac{r}{2}}}
 \end{equation}
 and
 \begin{equation}\label{inf}
\big( \text{ essinf}_{D^+_{\frac{r}{2}}} |\bar u^{\beta_0}|\big)^{-1} \le C
 |||\bar u^{-\beta_0}|||_{Q^+_{\frac{r}{2}}}
 \end{equation}
where the norm $|||\cdot|||$ is defined in Definition \ref{norma3barre}, we have let $\bar u= u+k+\e$ with $\e>0$ arbitrary and $k$ as in the statement of Theorem \ref{T:harnack}.
\end{lemma}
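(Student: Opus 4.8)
The plan is to run a Moser-type iteration, adapted to the subelliptic parabolic setting, using the energy estimate \eqref{III} of Lemma \ref{fundamental} as the basic building block. First I would fix $(x_0,t_0)$ with $Q_{2r}(x_0,t_0)\subset Q$ and work with $\bar u=u+k+\e$, which by the structure conditions \eqref{structure} is itself (up to harmless modifications of the lower-order data) a nonnegative supersolution/subsolution of the relevant inequalities, with $\bar u\ge k+\e>0$ so that all negative powers $\bar u^{\beta}$ and $\log\bar u$ make sense. For \eqref{sup} I would apply \eqref{III} with $\beta>0$ on a nested family of parabolic cylinders interpolating between $Q^-_{r/2}$ and (a slightly enlarged version of) $D^-_{r/2}$, choosing cut-off functions $\eta$ via Proposition \ref{longprop}(ii) that are $1$ on the inner cylinder and supported in the outer one, with $|X\eta|\lesssim 1/(\text{gap})$ and $|\p_t\eta|\lesssim 1/(\text{gap})^2$. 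Feeding the resulting control of $\|\bar u^{\beta_1}\|_{2,\infty}$ and $\|X\bar u^{\beta_1}\|_{2,2}$ (where $\beta_1=(\beta+1)/2$) into the parabolic Sobolev/interpolation inequality Lemma \ref{lemma2}(ii) upgrades the $L^{2,2}$-norm of $\bar u^{\beta_1}$ on the inner cylinder to an $L^{2\sigma}$-type norm with gain $\sigma=1+2\theta/N$, exactly as in the maximum principle argument \eqref{24}--\eqref{26}. Iterating over a geometric sequence of exponents and shrinking cylinders, and summing the resulting geometric series of constants (here the restriction on $r<20R$ and the doubling hypothesis guarantee we stay inside $Q$ and that the volumes appearing are comparable), yields the $L^\infty$ bound on $D^-_{r/2}$ in terms of $|||\bar u^{\beta_0}|||$ on $Q^-_{r/2}$ for any exponent $\beta_0$ of the stated form.

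For \eqref{inf} I would repeat the scheme with negative powers: apply \eqref{III} with $\beta<0$, $\beta\ne -1$, on cylinders shrinking from $Q^+_{r/2}$ down to $D^+_{r/2}$. Since $\mathrm{sign}(\beta)<0$ now, the inequality \eqref{III} controls $\int\eta^2\bar u^{\beta-1}|X\bar u|^2$ with the correct sign, and setting $v=\bar u^{\beta/2}$ (which for $\beta<0$ is a positive power of $\bar u^{-1}$) one gets the same Caccioppoli-type bound on $v$, hence the same Moser iteration driving the exponent toward $-\infty$ and producing $\big(\mathrm{essinf}_{D^+_{r/2}}\bar u^{\beta_0}\big)^{-1}=\mathrm{esssup}_{D^+_{r/2}}\bar u^{-\beta_0}\lesssim |||\bar u^{-\beta_0}|||_{Q^+_{r/2}}$. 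The bookkeeping is slightly different because the sequence of exponents must avoid $-1$, which is why the admissible $\beta_0$ are taken of the special form $(1+2\theta/N)^{-h}(2+2\theta/N)^{-1}$: starting from such a $\beta_0$ and multiplying repeatedly by $\sigma=1+2\theta/N$ keeps all intermediate exponents bounded away from the bad value, while still covering the range needed later to close the Harnack argument.

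The main obstacle I expect is the careful choice of the geometry of the nested cylinders together with the test functions, so that: (i) each $\eta$ has the right gradient and time-derivative bounds, (ii) the ``$\mathcal F_1 \bar u^{\beta+1}$'' term and the ``$\eta|\p_t\eta||\mathcal H(u)|$'' term in \eqref{III} are both dominated by the triple-norm quantities via Hölder and the fact that $b,c,e,f,g,h,d$ lie in the subcritical $L^{p,q}$/$L^{\al,\beta}$ classes with the gap $\theta>0$ from \eqref{theta}, and (iii) the constants produced at each step of the iteration form a convergent product. A secondary technical point is that \eqref{III} is stated on time intervals $[t_1,t_2]$ rather than with a free upper endpoint, so to obtain an $L^{2,\infty}$ bound one must, as in \eqref{28}--\eqref{29}, take a supremum over the choice of $t_2$ inside the relevant time slab — this is where the particular vertical dimensions of $D^-_{r/2}$ and $D^+_{r/2}$ (and the time-lag structure built into \eqref{cylinders}) are used. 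Once these estimates are in place the iteration itself is the standard Moser machine, following \cite[Section 3]{MR0244638} and \cite{SC} with Steklov averages replacing the convolution smoothing.
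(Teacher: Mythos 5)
Your proposal is essentially the same Moser-iteration argument the paper uses: feed the energy estimate \eqref{III} (with $v=\bar u^{\beta_1}$, $\beta_1=(\beta+1)/2$) through nested parabolic cut-offs (constructed via Proposition \ref{longprop}(ii)), apply Lemma \ref{lemma2} to gain integrability $\sigma=1+2\theta/N$, and iterate as in \cite{MR0244638} and \cite{SC}. Two small but real slips are worth flagging. First, for \eqref{sup} you restrict to $\beta>0$; but every admissible $\beta_0=(1+2\theta/N)^{-h}(2+2\theta/N)^{-1}$ satisfies $\beta_0<1/(1+\sigma)<1/2$, so the initial exponent is $\beta=2\beta_0-1<0$, and the early iteration steps necessarily involve $-1<\beta<0$ — the paper accordingly runs the sup iteration for all $\beta>-1$, $\beta\neq 0$, choosing the free endpoint in \eqref{III} on the side where $\operatorname{sign}(\beta)$ yields a positive $L^{2,\infty}$ contribution. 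Second, you explain the special form of $\beta_0$ as keeping the exponents away from $-1$; the actual issue in the sup iteration is the forbidden value $\beta=0$ (equivalently $\beta_1=1/2$), since $F_1$ and $H_1$ in \eqref{F_1} contain $|\beta|^{-1}$; the choice $\beta_0^{-1}=\sigma^h(1+\sigma)$ ensures $\beta_0\sigma^m$ stays uniformly bounded away from $1/2$ because $(1+\sigma)/2$ is strictly between consecutive powers of $\sigma$. With these corrections your argument matches the paper's.
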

\begin{proof}
Since translations in the time variable do not affect the structure constants in \eqref{structure} we will prove the two estimates
\eqref{sup} and \eqref{inf} separately in cylinders
 $$S(s)=B(x_0,sr)\times  \bigg(\frac{1-s}{6}r^2, \frac{1+s}{6}r^2\bigg),$$
for each $1/3\le s \le 1/2$
Let $\beta\neq -1$. Substitute in \eqref{III} $v=\bar u^{\beta_1}$ with $\beta_1=(\beta+1)/2$ to obtain
\begin{multline}\label{42}
\text{sign}(\beta) \bigg(\int_\Om [\eta^2 v^2]_{t_1}^{t_2} dx +\frac{a\beta}{2{\beta_1}^2}  \int_{t_1}^{t_2} \int_\Om
\eta^2 |X v|^2 dx dt \bigg) \\
\le  \int_{t_1}^{t_2} \int_\Om ( \mathcal F_1+2 |\beta+1|^{-1} \eta |\p_t \eta| )v^2 dx dt.
\end{multline}
To prove \eqref{sup} we let  $\beta>-1$. For $1/2\le l'<l \le1/3$  choose $\eta\in Lip_0(S(l))$ with $\eta=1$ on $S(l')$
and $|X\eta|\le \frac{C}{(l-l')r}$ and $|\p_t \eta|\le \frac{C}{(l-l')r^2}$. Substituting $\eta$ in \eqref{42} and using H\"older inequality we obtain the following estimate for the RHS,
\begin{multline}\label{43}
||( \mathcal F_1+2 |\beta+1|^{-1} \eta |\p_t \eta| )v^2 ||_{1,1}
\le \max(|\beta|,|\beta+1|^{-1},|\beta|^{-1}) C(l-l')^{-2} r^{-2} |||v|||^2_{S(l)},
\end{multline}
where here and for the rest of the proof we indicate by $C$   constants as in the statement of the lemma.
In view of \eqref{43} we obtain for all $\beta>-1$, $\beta\neq 0$
\begin{equation}
||\eta v^2||_{2,\infty}^2+a||\eta v^2||_{2,2} \le \frac{C(1+|\beta|^{-2}+|\beta|^{-1})\big(1+(\frac{\beta+1}{2})^2\big)}{r^2(l-l')^2} |||v|||^2_{S(l)}.
\end{equation}
By means of the Sobolev inequality (Lemma \ref{lemma2}) the latter yields the basic iteration formula
\begin{equation}\label{44}
|||(\eta v)^\sigma|||^{2/\sigma}\le  \frac{C(1+|\beta|^{-2}+|\beta|^{-1})\big(1+(\frac{\beta+1}{2})^2\big)}{r^2(l-l')^2} |||v|||^2_{S(l)},
\end{equation}
where the {\it gain} in integrability is $\sigma=1+2\frac{\theta}{N}$, and $N$ is the homogenous dimension. Set the iteration step to be $\beta_m=\beta_0\sigma^m$, for $m=1,2,...$. To avoid the exponent $\beta=0$, it is convenient to set $\beta_0^{-1}=\sigma^h(1+\sigma)$ for any $h=1,2,...$.  To carry out the iteration we let $l_m=\frac{1}{3}+\frac{2^{-m-1}}{3}$
and $l_m'=\frac{1}{3}+\frac{2^{-m-2}}{3}$ and proceed as in Moser's original paper arriving eventually at the estimate
$$\text{ esssup}_{  S(\frac{1}{3})}
|\bar u^{\beta_0}|
\le
 C|||\bar u^{\beta_0}|||_{S(\frac{1}{2})}.$$
  The proof of \eqref{inf} is very similar and is omitted.
\end{proof}
%
%

%
%

%
%
Next, we turn our attention to the 'bridge' step in the Moser iteration scheme, where the sup and the inf estimates
established above are linked by means of the John-Nirenberg lemma.

\begin{lemma}\label{bridge}
Let $u\ge 0$ be a weak solution of \eqref{equation} in $Q=\Om\times (0,T)$ and $0<r<20R$ with $R$ as in \eqref{doubling}. There exists $B>0$ depending on $diam(\Om)$,$C_D,C_P,C_L$ and on the structure constants \eqref{structure} such that for every $Q_{2r}(x_0,t_0),Q^+_r(x_0,t_0),Q^-_r(x_0,t_0)\subset Q$
one has
\begin{equation}\label{bmo}
\frac{1}{|Q^+_r|}\frac{1}{|Q^-_r|} \int_{Q^+_r}\int_{Q^-_r} \sqrt{\bigg(\log \bar u(y,s) - \log \bar u(x,t)\bigg)^+} dyds \ dxdt \le B,\end{equation}
where we have let $\bar u= u+k+\e$ with $\e>0$ arbitrary and $k$ as in the statement of Theorem \ref{T:harnack}.
\end{lemma}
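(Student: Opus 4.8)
The plan is to derive the BMO-type bound \eqref{bmo} by exploiting the logarithmic Caccioppoli inequality in \eqref{III} with the critical exponent $\beta=-1$, which produces the function $\mathcal{H}(\bar u)=\log\bar u$ rather than a power. Applying \eqref{III} with $\beta=-1$ on the cylinder $Q_{2r}$ with a cut-off $\eta\in\Lip_0(Q_{2r})$ satisfying $\eta\equiv 1$ on $Q_r^+\cup Q_r^-$, $|X\eta|\le C/r$, and $|\p_t\eta|\le C/r^2$, the second term on the left supplies control of $\int\int \eta^2\bar u^{-2}|X\bar u|^2\,dx\,dt = \int\int \eta^2|X\log\bar u|^2\,dx\,dt$, while the right-hand side is bounded by $C r^{-2}|Q_{2r}|$ times the $L^{p,q}$-norms coming from the structure conditions \eqref{structure} — here the choice $\bar u = u+k+\e$ with $k=\|f\|+\|g\|+\|h\|$ is exactly what makes the terms $f^2/\kappa^2$, $g/\kappa$, $h/\kappa$ in $F_1,G_1,H_1$ bounded. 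Thus I would first establish
\begin{equation}\label{P:logcaccioppoli}
\int_{Q_r^+}\int |X\log\bar u|^2\,dx\,dt + \int_{Q_r^-}\int |X\log\bar u|^2\,dx\,dt \le \frac{C}{r^2}\,|B(x_0,2r)|,
\end{equation}
together with a bound on the boundary term $\int_\Om \eta^2[\log\bar u]_{t_1}^{t_2}\,dx$ that will be used to compare time slices.

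Next, I would apply the Poincar\'e inequality \eqref{Poincare} to $w(\cdot,t)=\log\bar u(\cdot,t)$ on each ball $B(x_0,r)$, for a.e.\ fixed $t$, to get $\int_{B(x_0,r)}|w(\cdot,t)-w_{B}(t)|^2\,dx \le C_P r^2\int_{B(x_0,2r)}|Xw(\cdot,t)|^2\,dx$, where $w_B(t)$ is the spatial average over $B(x_0,r)$. Integrating in $t$ over the relevant time interval and inserting \eqref{P:logcaccioppoli} bounds the spatial oscillation of $\log\bar u$ about its running mean $w_B(t)$. To control the variation of $w_B(t)$ in time, I would test \eqref{weak1} (or equivalently use the boundary term in \eqref{III} with $\beta=-1$) against $\phi = \eta^2/\bar u$, obtaining a differential inequality of the form $\big| \frac{d}{dt}\! \int \zeta(x)\log\bar u\,dx\big| \le C\!\int \zeta |X\log\bar u|^2 + (\text{data})$ for a fixed spatial weight $\zeta$ supported in $B(x_0,2r)$ with $\zeta\equiv 1$ on $B(x_0,r)$; combined with \eqref{P:logcaccioppoli} this shows $w_B$ is, up to an additive constant, of bounded variation with total variation $\lesssim$ (a constant independent of $r$). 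This is the classical Moser mechanism: $\log\bar u$ splits as a function with controlled spatial Dirichlet energy plus a function of $t$ alone with controlled variation.

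With these two ingredients the estimate \eqref{bmo} follows by a direct computation: for $(y,s)\in Q_r^+$ and $(x,t)\in Q_r^-$ (so $s>t$, since $Q_r^+$ sits at the top and $Q_r^-$ lower down), write $\log\bar u(y,s)-\log\bar u(x,t) = \big(\log\bar u(y,s)-w_B(s)\big) + \big(w_B(s)-w_B(t)\big) + \big(w_B(t)-\log\bar u(x,t)\big)$, use $\sqrt{(A+B+C)^+}\le \sqrt{|A|}+\sqrt{|B|}+\sqrt{|C|}$, apply Cauchy--Schwarz to pass from $L^1$ of $\sqrt{\,\cdot\,}$ to the square root of $L^1$, and bound the first and third terms by the Poincar\'e estimate and the middle term by the total variation bound. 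The doubling hypothesis \eqref{doubling} (through $|B(x_0,2r)|\le C_D^{-1}|B(x_0,r)|$ and Proposition \ref{longprop}(i)) is what lets all the normalizing factors $|Q_r^\pm|^{-1}$ absorb the $|B(x_0,2r)|$ and $r^2$ weights, leaving a constant $B$ depending only on the stated quantities.

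The main obstacle I anticipate is the time-derivative control of $w_B(t)$: testing the weak formulation with $\eta^2/\bar u$ is legitimate only after a Steklov-average regularization as in Lemma \ref{fundamental}, and one must be careful that the structure terms $c|\xi|$, $d|u|$, $g$ on the right of \eqref{structure} — after division by $\bar u$ and use of Young's inequality against the good term $|X\log\bar u|^2$ — produce only quantities controlled by the $L^{p,q}$/$L^{\al,\be}$ norms times $r^{-2}|B(x_0,2r)|$, which in turn is handled via the interpolation Lemma \ref{lemma2} exactly as in the proof of the maximum principle. Getting the sign of the middle term right (so that one really obtains a one-sided, $(\,\cdot\,)^+$ bound tied to the time ordering $s>t$ of $Q_r^+$ above $Q_r^-$) is the delicate point where the parabolic geometry, encoded in the time-lag function $T((x,t),r)$ and the placement of the cylinders in \eqref{cylinders}, enters; everything else is a routine adaptation of \cite[Section 3]{MR0244638} and Moser's argument to the vector-field setting using \eqref{Poincare} and \eqref{doubling} in place of their Euclidean counterparts.
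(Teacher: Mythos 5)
You have the right building blocks — $\beta=-1$ in \eqref{III} to get the logarithmic Caccioppoli inequality, a Poincar\'e step in space, a time-derivative estimate for an average of $\log\bar u$, and finally a combination argument — but the way you glue them leaves a genuine gap that the paper closes by using the \emph{weighted} Poincar\'e inequality \eqref{weighted}, not the plain one \eqref{Poincare}. When you test the (Steklov-averaged) equation with $\phi=\eta^2/\bar u$, the time-derivative term produces $\frac{d}{dt}\int\eta^2\log\bar u\,dx$, so the natural running average is the $\eta^2$-weighted average $V(t)=\int\eta^2\log\bar u\,dx/\int\eta^2\,dx$, whereas your use of \eqref{Poincare} gives you the spatial oscillation about the \emph{plain} average $w_B(t)$. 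These two averages do not coincide, and the inequality $\int_B|v-v_B|^2\le C_Pr^2\int_{2B}|Xv|^2$ only holds for the plain average $v_B$ (it is the minimizer over constants); you cannot simply replace $v_B$ by $V(t)$ on the left, as the inequality would then go the wrong way. The paper sidesteps this precisely by choosing $\eta(x,t)=\xi(x)w(t)$ with $\xi=\sqrt{\phi(d(\cdot,x_0))}$ for a weight $\phi$ satisfying the hypotheses of Proposition~\ref{longprop}(iii), so that the weighted Poincar\'e \eqref{weighted} applies \emph{with that same weight $\xi^2$ and that same weighted average $V(t)$}, yielding the differential inequality
$$\frac{dV}{dt}+\frac{C_1}{|B(x_0,r)|}\int_{B(x_0,r)}(v-V)^2\,dx\le\frac{C_2}{|B(x_0,r)|}\int_{B(x_0,r)}\mathcal F_1\,dx,$$
which is the exact form required by the abstract \cite[Lemma~7]{MR0244638}.

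A second, related gap is in your final step. You claim that the differential inequality shows $V(t)$ has total variation bounded independently of $r$, and then you propose a three-term split plus Cauchy–Schwarz. That is not how the mechanism works: what one actually extracts from the differential inequality above (via \cite[Lemma~7]{MR0244638}) is a \emph{measure estimate} of the level sets of $v-V$, essentially because the good term $\frac{1}{|B|}\int(v-V)^2$ is large whenever $v-V$ deviates on a set of positive density, forcing $V$ to move; the conclusion is the one-sided BMO bound of the form \eqref{bmo}, not a pointwise or $L^2$ bound on $V$. Your TV claim is too strong, and the straight Cauchy–Schwarz route does not produce the $\sqrt{(\,\cdot\,)^+}$ structure the John–Nirenberg lemma \eqref{Ipotesi H} requires. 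In short: your plan is close in spirit, but you need to (a) switch to the weighted Poincar\'e \eqref{weighted} with the $\sqrt{\phi}$ cut-off so the averages match, and (b) invoke the level-set argument of \cite[Lemma~7]{MR0244638} rather than a TV bound; both are essential and neither is a routine adaptation.
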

\begin{proof}
Set $\beta=-1$  and $v=\log \bar u$ in \eqref{III} to obtain
\begin{multline}\label{52}
-\int_\Om \eta^2 (v(\cdot,t_2)-v(\cdot,t_1)) dx +\frac{a}{2} \int_{t_1}^{t_2}\int_\Om \eta^2|Xv|^2 dx dt \\
\le  \int_{t_1}^{t_2}\int_\Om  \mathcal F_1  dx dt +2  \int_{t_1}^{t_2} \int_\Om  \eta |\p_t \eta| |v| dx dt,
\end{multline}
where $F_1$ is as in \eqref{III}. Next we choose the cut-off function so as to match the requirements of the weighted Poincar\'e inequality \eqref{weighted}: $\eta(x,t)=\xi(x)w(t)$ with $\xi(x)=\sqrt{\phi}(d(\cdot, x_0))$ and where $\phi$ satisfies the hypothesis in Proposition \ref{longprop}(iii). In this way $\xi\in Lip_0(B(x_0,r))$ and can be chosen identically equal to one in $B(x_0,r/2)$. We also let $w=1$ if $t>t_1$ and $w=0$ for $t<t_1/2$. Setting
$$V(t)=\frac{\int_\Om \xi^2 v(x,t)dx}{\int_\Om \xi^2dx},$$
in \eqref{52}  and applying \eqref{weighted} one obtains
\begin{multline}\label{page107}
V(t_2)-V(t_1)+ \frac{a}{2} \frac{C\kappa^{2/n}}{\bigg(\int_\Om \xi^2 dx\bigg)^{-1-\frac{2}{N}}}  \int_{t_1}^{t_2} \int_\Om |v-V(t)|^2 \xi^2 dx dt
\\
\le \frac{1}{\int_\Om \xi^2dx} \int_{t_1}^{t_2} \int_\Om \mathcal F_1 dx dt,
\end{multline}
where $\kappa=\inf_{x\in \Om} |B(x,r)| r^{-\log_2 C_D}>0$. Since $|B(x_0,r)|^{-2/N} \ge C(diam \Om)>0$  the estimate \eqref{page107} yields
\begin{equation}\label{53}
\frac{dV}{dt}+  \frac{C_1}{|B(x,r)|} \int_{B(x_0,r)} (v-V)^2dx \le \frac{C_2}{|B(x,r)|}\int_{B(x,r)} \mathcal F_1 dx,
\end{equation}
for every $0<t<T$ and for constants $C_1,C_2>0$ depending on the structure conditions and on $diam(\Om), C_D,C_L$ and $C_p$. The conclusion now follows from \cite[Lemma 7]{MR0244638} as in the Euclidean setting.
\end{proof}
The previous lemma tells us that $\log \bar u$ is in the parabolic BMO space. Applying Proposition \ref{JN} we conclude
\begin{cor}\label{bridge-1}
Let $u\ge 0$ be a weak solution of \eqref{equation} in $Q=\Om\times (0,T)$ and $0<r<20R$ with $R$ as in \eqref{doubling}. There exist  $C, \delta>0$ depending on $diam(\Om)$,$C_D,C_P,C_L$ and on the structure constants \eqref{structure} such that for every $Q_{2r}(x_0,t_0),Q^+_r(x_0,t_0),Q^-_r(x_0,t_0)\subset Q$
one has
\begin{equation}\label{bmo-1}
\bigg( \frac{1}{|D^+|} \int_{D^+_r} \! \int \bar u^{-\delta} dx dt \bigg)\bigg( \frac{1}{| D^-|} \int_{D^-_r}\! \int \bar u^{\delta} dx dt \bigg) \le C,
\end{equation}
where we have let $\bar u= u+k+\e$ with $\e>0$ arbitrary and $k$ as in the statement of Theorem \ref{T:harnack}.
\end{cor}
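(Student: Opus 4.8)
The plan is to combine the logarithmic BMO estimate from Lemma \ref{bridge} with the abstract John--Nirenberg lemma of Proposition \ref{JN}, taking $v=\log\bar u$. First I would observe that the two-variable inequality \eqref{bmo} in Lemma \ref{bridge} is almost, but not quite, in the form \eqref{Ipotesi H} demanded by Proposition \ref{JN}: the hypothesis there asks for a \emph{single} constant $C(Q)$ (morally the average of $v$ over an intermediate cylinder) such that $\frac{1}{|Q^+_r|}\int_{Q^+_r}\sqrt{(v-C(Q))^+}\le A$ and $\frac{1}{|Q^-_r|}\int_{Q^-_r}\sqrt{(C(Q)-v)^+}\le A$. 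So the first step is to extract such a constant from \eqref{bmo}. The natural choice is $C(Q)=\frac{1}{|Q^+_r|}\int_{Q^+_r}v\,dx\,dt$ (or the average over $Q^-_r$, or over a bridging cylinder); then for a.e. $(y,s)\in Q^-_r$ one has, using $\sqrt{(v(x,t)-v(y,s))^+}\ge \sqrt{(C(Q)-v(y,s))^+}-\sqrt{(v(x,t)-C(Q))^+}$ after averaging in $(x,t)$ over $Q^+_r$ and applying Jensen's inequality to the concave function $\sqrt{\,\cdot\,}$, the bound $\frac{1}{|Q^-_r|}\int_{Q^-_r}\sqrt{(C(Q)-v)^+}\le B+(\text{something controlled by }B)$. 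The symmetric estimate on $Q^+_r$ is obtained the same way. This is a routine ``splitting the constant'' argument of the type used in every Moser-type proof; I would spell out the elementary inequality $\sqrt{(a-c)^+}\le \sqrt{(a-b)^+}+\sqrt{(b-c)^+}$ and the application of Jensen, and absorb all constants into a new $A=A(B)$.

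With the hypotheses \eqref{Ipotesi H} verified for $v=\log\bar u$ and this $A$, the second step is simply to invoke Proposition \ref{JN}: since the doubling hypothesis \eqref{doubling} holds and the time-lag structure $Q^-_r=T(Q^+_r,r)$ was checked in the Preliminaries, the proposition yields constants $C,\delta>0$, depending on $\Om$, $C_D$ and $A$, such that with $f=e^{-v}=\bar u^{-1}$ one has
\begin{equation*}
\bigg(\frac{1}{|D^+_r|}\int_{D^+_r}\!\int f^{-\delta}\bigg)\bigg(\frac{1}{|D^-_r|}\int_{D^-_r}\!\int f^{\delta}\bigg)\le C,
\end{equation*}
which upon substituting $f=\bar u^{-1}$ is exactly \eqref{bmo-1}. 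Tracking dependencies: $B$ in Lemma \ref{bridge} depends on $\mathrm{diam}(\Om)$, $C_D$, $C_P$, $C_L$ and the structure constants, $A$ depends only on $B$, and then $C,\delta$ depend on $\Om$, $C_D$ and $A$ — so altogether on the quantities claimed in the statement. One should note that the roles of $D^+_r$ and $D^-_r$ relative to $f$ and $f^{-1}$ match: the John--Nirenberg conclusion puts $f^{-\delta}$ (i.e. $\bar u^{\delta}$... no: $f^{-\delta}=\bar u^{\delta}$) on $D^+_r$, so one must be careful to apply Proposition \ref{JN} with the sign of $v$ chosen so that the output cylinders line up with \eqref{bmo-1}; if needed, replace $v$ by $-v$ and interchange the roles of the $\pm$ cylinders, which is harmless since \eqref{bmo} is not symmetric but the time-lag hypothesis is designed precisely to handle this directionality.

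The main obstacle I anticipate is not conceptual but bookkeeping: making sure that the cylinders $Q^+_r, Q^-_r, D^+_r, D^-_r$ and the ``bridging'' average $C(Q)$ are all nested inside $Q_{2r}$ (respectively $Q_{3r}$) in the way Proposition \ref{JN} requires, and that the constant $C(Q)$ chosen in the first step is independent of the point $(y,s)$ at which we evaluate — i.e. it is a genuine constant on the cylinder, not a function. A secondary subtlety is that Proposition \ref{JN} is stated with $\sqrt{(v-C(Q))^+}$ integrated in the \emph{inner} spatial variable only and then averaged in time, matching the $L^{1}(Q)$ membership of $v$; one has to check that \eqref{bmo}, which integrates over the full space-time cylinders $Q^\pm_r$, indeed implies the per-time-slice bound up to a change of constant, which follows from Fubini and the fact that the $\sqrt{\,\cdot\,}$ quantity is nonnegative. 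Once these measure-theoretic matchings are in place, the corollary is immediate.
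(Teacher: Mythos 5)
Your high-level plan — feed the logarithmic estimate of Lemma \ref{bridge} into the abstract John--Nirenberg lemma, Proposition \ref{JN} — is exactly what the paper does; the paper's ``proof'' consists of the single sentence ``Applying Proposition \ref{JN} we conclude''. However, the specific way you propose to fill the gap between the two-variable estimate \eqref{bmo} and the one-constant hypothesis \eqref{Ipotesi H} does not work as written, and this is a genuine gap rather than mere bookkeeping.

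Two concrete problems. First, the elementary inequality is mis-stated: the correct rearrangement of $\sqrt{(a-c)^+}\le\sqrt{(a-b)^+}+\sqrt{(b-c)^+}$ with $a=C(Q)$, $b=v(x,t)$, $c=v(y,s)$ gives $\sqrt{(v(x,t)-v(y,s))^+}\ge\sqrt{(C(Q)-v(y,s))^+}-\sqrt{(C(Q)-v(x,t))^+}$, not the version with $\sqrt{(v(x,t)-C(Q))^+}$ that you wrote; the version you wrote is false (take $v(x,t)=C-1$, $v(y,s)=C-2$). Second, even with the corrected inequality the scheme does not close: averaging it over $Q^+_r\times Q^-_r$ produces, alongside the term controlled by \eqref{bmo}, the quantity $\tfrac{1}{|Q^+_r|}\int_{Q^+_r}\sqrt{(C-v)^+}$ — but that is precisely the term you are trying to estimate in the \emph{other} half of \eqref{Ipotesi H}, with the \emph{opposite} sign convention, so you end up with a circular pair of inequalities that never bound anything. (Moreover, note that the integrand in \eqref{bmo} is $(\log\bar u(y,s)-\log\bar u(x,t))^+$ with $(y,s)\in Q^-_r$ and $(x,t)\in Q^+_r$, the opposite order to what you wrote.) The honest route, which is what the paper implicitly uses, is that the proof of Lemma \ref{bridge} — via the differential inequality \eqref{53} and the Aronson--Serrin argument of \cite[Lemma 7]{MR0244638} — directly produces the one-constant weak-type estimates $|\{(x,t)\in Q^+_r: \log\bar u<c-\lambda\}|\lesssim\lambda^{-1}|Q^+_r|$ and $|\{(y,s)\in Q^-_r: \log\bar u>c+\lambda\}|\lesssim\lambda^{-1}|Q^-_r|$ for a suitable intermediate-time constant $c$, which upon integration against $d\lambda/(2\sqrt\lambda)$ give \eqref{Ipotesi H} for $v=-\log\bar u$ and $C(Q)=-c$; the two-variable inequality \eqref{bmo} recorded in the lemma is a \emph{consequence} of that one-constant estimate, not its source. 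Your final remark on the sign is in the right direction but should be stated precisely: one applies Proposition \ref{JN} with $v=-\log\bar u$ (so $f=e^{-v}=\bar u$ and $f^{-\delta}=\bar u^{-\delta}$ sits on $D^+_r$ as in \eqref{bmo-1}); no ``interchange of the $\pm$ cylinders'' is needed or possible, since the cylinders encode a fixed time direction.
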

In order to complete the proof of the Harnack inequality in Theorem \ref{T:harnack} one needs to link \eqref{bmo-1} with the RHS of inequalities
\eqref{sup} and \eqref{inf}.  This can be  easily accomplished following the same argument at the end of page 106 in \cite{MR0244638}.

\section{Stability of the homogenous structure and of the Poincar\'e
inequality in the Riemannian approximation to a Carnot-Caratheodory
space}

Let $X=(X_1,...,X_m)$ be smooth vector fields in $\R^n$ such that, together with all their commutators up to step $r$, they generate  $\R^n$ at every point. Following \cite[page 104]{NSW} we define the collections of commutators of same degree
$$X^{(1)}=\{ X_1,...,X_m\}, \ X^{(2)}=\{ [X_1,X_2],...,[X_{m-1},X_m]\}, \ etc. ...$$
Indicate by $Y_1,...,Y_p$ an enumeration of the components of $X^{(1)}, X^{(2)},...,X^{(r)}$ such that $Y_i=X_i$ for every $i\leq m$.  If $Y_k\in X^{(i)}$ we set the degree of $Y_k$ to be $d(Y_k)=d(k) =i$.

{For $\bar \e\le 1$ and for each $\e\in (0,\bar \e)$}  consider rescaled vector fields, and a suitable subfamily of their commutators
$$X_i^\e=\Bigg\{ \begin {array}{ll} Y_i & \text{ if } i\leq m,  \\
\e^{d(i) -1 } Y_i &\text{ if } m+1 \leq i \leq p
\end{array},\quad Y_i^\e=\Bigg\{ \begin {array}{ll} X^\e_i &\text{ if } i\leq p,  \\
 Y_{i-p+m} &\text{ if } p+1 \leq i \leq 2p -m
\end{array}$$
We will also extend the degree function, setting $d_\e(i)=1$ for all $i\leq p$, and $d_\e(i) = d(i-p+m)$ if  $i\geq p+1$.  In order to simplify notations we will denote $X=
 X^0$, $Y=Y^0$, $d_0=d$
 and use the same notation for both families of vector fields
 (dependent or independent of $\e$).

Note that for every $\e$ the sets $\{Y_i^\e\}$ extends the family $(X_i^\e)$ to a  new family of vector fields satisfying assumption (I) on page 107 \cite{NSW}: There exist smooth functions $c_{jk}^l$, depending on $\e$, such that
$$[Y^\e_j,Y^\e_k] =\sum_{d_\e(l)\leq d_\e(j) + d_\e(k)} c_{jk}^l {Y_l^\e}$$
and $$\{Y^\e_j\}_{j=1}^{2p-m} \text{ span } \R^n  \text{ at every point }.$$

\begin{rmrk}
Note that the coefficients $c_{jk}^l$ will be unbounded as $\e\to 0$. In principle this could be a problem  as the
doubling constant in the proof in \cite{NSW} depends indirectly from the $C^r$ norm of these functions. \end{rmrk}

Next we consider the Carnot-Caratheodory metric $d_\e(\cdot, \cdot)$  associated to the family of vector fields $(X_1^\e, ...,X_{p}^\e).$
%
Note that for $1\ge \bar \e \ge \e>0$,   $d_\e$ corresponds to the distance function of a Riemannian metric $g_\e$, which degenerates to a sub-Riemannian metric as $\e\to 0$.

Following \cite[page 110]{NSW}, for every $n-$tuple $I=(i_1,...,i_n)$, we define the coefficient $$\lambda^\e_I(x)=\det (Y^\e_{i_1}(x),...,Y^\e_{i_n}(x)),$$
for  $\e\geq 0$. For a fixed constant $0<C_{2,\e}<1$, choose $I_\e=(i_{\e 1},...,i_{\e n})$ such that
\begin{equation}\label{bestI}|\lambda^\e_{I_\e}(x)|r^{d_\e(I_\e)} \ge C_{2,\e} max_J |\lambda^\e_J(x)|r^{d_\e(J)}.\end{equation}
and denote $J_\e$ the family of remaining indices, so that
$\{Y_{i_{\e j}}: i_{\e j} \in I_\e\} \cup \{Y_{i_{\e k}}: i_{\e k} \in J_\e\}$ is the complete list $Y^\e$.
We will refer to $I_0$ { as the choice corresponding to the $n-$tuple $Y_{i,1},...,Y_{i,n}$ realizing
\eqref{bestI} exactly as in the   setting of \cite{NSW}.}
The core of the result of Nagel, Stein and Wainger \cite[Theorem 1]{NSW}, is to prove that
if  $v$ and $x$ are fixed, and
$$Q_\e( r)=\{u\in \R^n: |u_j| \leq  r^{d_\e( i_{\e j})}\}$$
is a weighted cube in $\R^n$, then
 $|\lambda^\e_{I_\e}(x)|$ provides an
estimates of the Jacobian of the exponential mapping
$\to \Phi_{\e, v, x}(u)$ defined for   $u\in Q(r)$ as \begin{equation}\label{phi}
\Phi_{\e, v, x}(u) = exp\Big(\sum_{i_{\e j}\in I_\e} u_j Y^\e_{i_{\e j}} + \sum_{i_{\e k}\in J_\e} v_k Y^\e_{i_{\e k}} \Big)(x).\end{equation}

\bigskip

More precisely, for $\e\geq 0$ and fixed \cite[Theorem 7]{NSW} states
\begin{thrm}\label{MAINNSWeps} For every $\e\geq 0$,  and  $K\subset \subset \R^n$ there exist $R_\e>0$ and constants $0<C_{1, \e}, C_{2, \e} <1$  such that for every $x\in K$ and  $0<r<R_\e$, if $I_\e$ is such that \eqref{bestI}
holds, then

\begin{itemize}
\item[i)] if $|v_k| \leq C_{2 \e}r^{d(i_{\e k})}$,
$\Phi_{\e, v, x } $ is one to one on the box $Q_\e(C_{1, \e} r)$
\item[ii)] if $|v_k| \leq C_{2 \e}r^{d(i_{\e k})}$
the Jacobian matrix of $\Phi_{\e, v, x}$ satisfies on the cube $Q_\e(C_{1, \e} r)$
$$\frac{1}{4} |\lambda^\e_{I_\e} (x)| \leq |J\Phi_{\e, v, x}| \leq 4 |\lambda^\e_{I_\e} (x)| $$
\item[iii)]
$$\Phi_{\e, v, x}(Q_\e (C_{1, \e} r))  \subset B_{\e}(x,   r) \subset \Phi_{\e, v, x}(Q_\e(C_{1, \e} r/C_{2, \e})) $$
\end{itemize}
\end{thrm}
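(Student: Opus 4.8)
The plan is to deduce Theorem~\ref{MAINNSWeps} directly from \cite[Theorem 7]{NSW} (see also \cite[Theorem 1]{NSW}), applied for each fixed value of the parameter $\e\ge 0$ to the extended family $(Y^\e_j)_{j=1}^{2p-m}$. The verification that this family satisfies the structural hypothesis~(I) of \cite[p.~107]{NSW} — namely that $[Y^\e_j,Y^\e_k]=\sum_{d_\e(l)\le d_\e(j)+d_\e(k)} c^l_{jk}\,Y^\e_l$ for suitable smooth functions $c^l_{jk}$ and that the $Y^\e_j$ span $\R^n$ at every point — has already been carried out in the discussion preceding the theorem. Thus all the hypotheses of NSW are in force, and assertions (i)--(iii) are exactly their conclusion, with $R_\e$, $C_{1,\e}$, $C_{2,\e}$ the constants produced by their proof for this particular family (and therefore, \emph{a priori}, dependent on $\e$).

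For completeness I would record the three ingredients of the NSW argument. First, the columns of the Jacobian matrix of $u\mapsto\Phi_{\e,v,x}(u)$ at $u=0$ are exactly $Y^\e_{i_{\e 1}}(x),\dots,Y^\e_{i_{\e n}}(x)$, so $|J\Phi_{\e,v,x}(0)|=|\lambda^\e_{I_\e}(x)|$. Second, differentiating the flow and using the group law, one writes $\partial_{u_j}\Phi_{\e,v,x}(u)$ as a linear combination of the $Y^\e_l$ evaluated along the relevant integral curve, with coefficients that are polynomials in $u$ and $v$; the maximality property~\eqref{bestI}, combined with the smallness of $u$ and $v$ imposed by the box $Q_\e(C_{1,\e}r)$ and the bound on the $v_k$, forces each off-diagonal contribution to be dominated by a small fixed multiple of $|\lambda^\e_{I_\e}(x)|$, so that a perturbation argument around $u=0$ yields the two-sided bound~(ii) once $C_{1,\e}$ and $R_\e$ are taken small enough. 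Third, assertion~(i) follows from (ii) by a quantitative inverse function theorem — the Jacobian stays bounded away from $0$ and the second derivatives are controlled on the box — while the inclusions~(iii) are NSW's main geometric comparison: on one hand each coordinate curve of $Q_\e(C_{1,\e}r)$ is, up to a constant factor, a subunit curve for the family $(X^\e_i)$, whence $\Phi_{\e,v,x}(Q_\e(C_{1,\e}r))\subset B_\e(x,r)$; on the other hand every curve joining $x$ to a point of $B_\e(x,r)$ of $d_\e$-length at most $r$ is shown to remain inside $\Phi_{\e,v,x}(Q_\e(C_{1,\e}r/C_{2,\e}))$.

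The step I expect to be genuinely delicate is not the application itself but the bookkeeping hidden in the phrase ``for this particular family'': as the Remark above points out, the structure functions $c^l_{jk}$ blow up like negative powers of $\e$ as $\e\to0$, and the NSW constants $R_\e,C_{1,\e},C_{2,\e}$ depend on their $C^r$ norms, so the naive invocation produces constants that degenerate in the limit $\e\to0$. For the present statement, in which $\e$ is fixed, this is harmless; but it is precisely the obstruction that must be removed in order to reach the $\e$-uniform version needed for Proposition~\ref{indipendenza}, and there I would address it by retracing the NSW estimates while keeping explicit track of the power of $\e$ carried by each $c^l_{jk}$ and by each weight $r^{d_\e(\cdot)}$, exhibiting the cancellation that makes the constants $\e$-independent — equivalently, by an anisotropic rescaling conjugating the family $(X^\e_i)$ back to a fixed one.
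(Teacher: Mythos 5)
Your proposal is correct and takes essentially the same approach as the paper: the theorem is, as you say, a direct application of \cite[Theorem 7]{NSW} to the augmented family $(Y^\e_j)$ for each fixed $\e\ge 0$, once the structural hypothesis~(I) has been verified as in the discussion preceding the statement. You rightly flag that the resulting constants are \emph{a priori} $\e$-dependent and that the uniformity is the content of Proposition~\ref{indipendenza}, not of this theorem.
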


\bigskip

A direct consequence of this fact is that the measure of the Ball centered in $x$ can be estimated by the measure of the cube and the Jacobian determinant of $\Phi_{\e,v,x}$.  Varying the central point $x$, \cite{NSW} obtain:

\begin{thrm}\label{balmeasure}(\cite[Theorem 1]{NSW})  For every $\e\geq 0$,  and  $K\subset \subset \R^n$ there exists a constant $R_\e>0$ and constants $C_{3 \e}, C_{4\e}>0$ such that $x\in K$ and  $0<r<R_\e$
\begin{equation}\label{nsw}
C_{3 \e}\sum_I |\lambda^\e_I(x)| r^{d(I)} \le |B_\e(x,r)|\le C_{4 \e} \sum_I |\lambda^\e_I(x)| r^{d(I)},
\end{equation}
\end{thrm}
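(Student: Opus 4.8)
The plan is to read off \eqref{nsw} from Theorem \ref{MAINNSWeps}, keeping $\e$ fixed so that all constants may depend on it; this is exactly the final step of \cite{NSW}, only with the extra parameter carried along. I would fix the compact set $K$, let $R_\e, C_{1,\e}, C_{2,\e}$ be as in Theorem \ref{MAINNSWeps}, and fix $x\in K$, $0<r<R_\e$. Then I would choose an $n$-tuple $I_\e=(i_{\e 1},\dots,i_{\e n})$ realizing \eqref{bestI}, pick any anchor $v$ satisfying $|v_k|\le C_{2,\e}r^{d(i_{\e k})}$ (for instance $v=0$), and abbreviate $\Phi=\Phi_{\e,v,x}$ for the exponential map \eqref{phi}.

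The first step is to convert the two inclusions of Theorem \ref{MAINNSWeps}(iii), namely $\Phi(Q_\e(C_{1,\e}r))\subset B_\e(x,r)\subset \Phi(Q_\e(C_{1,\e}r/C_{2,\e}))$, into a volume estimate. For the lower bound, $\Phi$ is smooth and, by part (i), one-to-one on $Q_\e(C_{1,\e}r)$, so the change of variables formula and the lower Jacobian bound of part (ii) give
\begin{equation*}
|B_\e(x,r)|\ \ge\ \big|\Phi(Q_\e(C_{1,\e}r))\big|\ =\ \int_{Q_\e(C_{1,\e}r)}|J\Phi(u)|\,du\ \ge\ \tfrac14\,|\lambda^\e_{I_\e}(x)|\,\big|Q_\e(C_{1,\e}r)\big|.
\end{equation*}
For the upper bound, the area formula for $C^1$ maps (no injectivity needed here) and the upper Jacobian bound give $|B_\e(x,r)|\le \int_{Q_\e(C_{1,\e}r/C_{2,\e})}|J\Phi|\le 4\,|\lambda^\e_{I_\e}(x)|\,|Q_\e(C_{1,\e}r/C_{2,\e})|$. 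Since $|Q_\e(\rho)|=2^n\prod_{j}\rho^{d(i_{\e j})}=2^n\rho^{\,d(I_\e)}$, both estimates collapse, with constants depending only on $n$, $C_{1,\e}$, $C_{2,\e}$, to
\begin{equation*}
c_\e\,|\lambda^\e_{I_\e}(x)|\,r^{\,d(I_\e)}\ \le\ |B_\e(x,r)|\ \le\ c_\e'\,|\lambda^\e_{I_\e}(x)|\,r^{\,d(I_\e)}.
\end{equation*}

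The second step replaces the single dominant term by the full sum $\sum_I|\lambda^\e_I(x)|r^{d(I)}$. Trivially $|\lambda^\e_{I_\e}(x)|r^{d(I_\e)}\le\sum_I|\lambda^\e_I(x)|r^{d(I)}$, while the maximality \eqref{bestI} together with the fact that there are at most $(2p-m)^n$ ordered $n$-tuples drawn from $\{Y^\e_1,\dots,Y^\e_{2p-m}\}$ yields $\sum_I|\lambda^\e_I(x)|r^{d(I)}\le (2p-m)^n\max_I|\lambda^\e_I(x)|r^{d(I)}\le C_{2,\e}^{-1}(2p-m)^n|\lambda^\e_{I_\e}(x)|r^{d(I_\e)}$. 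Chaining these two inequalities with the previous display gives \eqref{nsw}, with $C_{3,\e}$ and $C_{4,\e}$ depending only on $n$, $p$, $m$, $C_{1,\e}$, $C_{2,\e}$; in particular they are uniform over $x\in K$ because $R_\e, C_{1,\e}, C_{2,\e}$ in Theorem \ref{MAINNSWeps} are.

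There is no real obstacle at this level: the only place injectivity of the exponential map enters is the lower bound, and it is supplied by Theorem \ref{MAINNSWeps}(i) on the small box $Q_\e(C_{1,\e}r)$, while the upper bound needs only the inequality $|\Phi(A)|\le\int_A|J\Phi|$, valid for any $C^1$ map. Thus the entire substance of Theorem \ref{balmeasure} sits inside Theorem \ref{MAINNSWeps}, and the argument above is essentially bookkeeping. The genuinely hard point is not this corollary but controlling how $R_\e$, $C_{1,\e}$, $C_{2,\e}$ degenerate as $\e\to 0$ — the $\e$-uniform refinement of Theorem \ref{MAINNSWeps} — which is what the stability results to follow will have to establish.
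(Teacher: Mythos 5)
Your plan is the same as the paper's: the paper does not prove Theorem~\ref{balmeasure} itself but cites \cite[Theorem 1]{NSW} and remarks that it is a ``direct consequence'' of Theorem~\ref{MAINNSWeps}, which is exactly the derivation you write out. The lower bound and the replacement of the single maximal term $|\lambda^\e_{I_\e}(x)|r^{d(I_\e)}$ by the full sum $\sum_I|\lambda^\e_I(x)|r^{d(I)}$ via the near-optimality \eqref{bestI} are both correct as stated.

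There is, however, one glossed step in your upper bound. You apply the pointwise Jacobian estimate $|J\Phi_{\e,v,x}|\le 4|\lambda^\e_{I_\e}(x)|$ over the large cube $Q_\e(C_{1,\e}r/C_{2,\e})$, but part (ii) of Theorem~\ref{MAINNSWeps} only asserts that bound on the \emph{small} cube $Q_\e(C_{1,\e}r)$. To repair this you must invoke Theorem~\ref{MAINNSWeps} a second time with $r$ replaced by $r'=r/C_{2,\e}$ (shrinking $R_\e$ so that $r'<R_\e$). Two things then need to be observed: the maximizing $n$-tuple for radius $r'$ need not coincide with $I_\e$, so one uses \eqref{bestI} at both radii $r$ and $r'$ to conclude that $|\lambda^\e_{I_\e(r')}(x)|(r')^{d(I_\e(r'))}$ and $|\lambda^\e_{I_\e(r)}(x)|r^{d(I_\e(r))}$ are comparable up to a factor depending only on $C_{2,\e}$ and the range of degrees; and the exponential map $\Phi_{\e,v,x}$ itself depends on the chosen splitting $I_\e\cup J_\e$, so one must either check that both splittings give sandwiched balls of comparable volume, or use the doubling estimate \eqref{doubling} (itself a consequence of the lower bound you already have plus the sum comparability) to pass from $|B_\e(x,r')|$ back to $|B_\e(x,r)|$. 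None of this threatens the result, and the constants remain controlled by $n,p,m,C_{1,\e},C_{2,\e}$, but as written the middle inequality in your upper-bound chain is not supplied by part (ii) of the theorem you quote.

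Your closing remark is exactly the paper's point of view: the substance is in the $\e$-uniformity of $R_\e, C_{1,\e}, C_{2,\e}$ in Theorem~\ref{MAINNSWeps}, which is what Proposition~\ref{indipendenza} establishes; passing from there to \eqref{nsw} is mechanical.
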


To prove that  that these inequalities hold uniformly in $\e$, it is enough to study the constants $C_{1\e}$ $C_{2\e}$ and show that they do not vanish as $\e\to 0$. Without loss of generality we can assume that both constants are non-decreasing in $\e$,
otherwise we consider a new pair of constants $\tilde C_{i,\e}=\inf_{s\in [\e,\bar \e]} C_{i,s}$, for $i=1,2$.

\begin{prop}\label{indipendenza}
For every $\e\in[0,\bar \e]$, the constants $R_\e, C_{1, \e}$ and $C_{2, \e}$ in Theorem \ref{MAINNSWeps} may be chosen to be  independent of $\e$, depending only on  the $C^{r+1}$ norm of the vector fields, $\bar \e$, and on $K$  .
\end{prop}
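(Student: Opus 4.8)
The plan is to revisit the proof of Theorem 7 in \cite{NSW} (our Theorem \ref{MAINNSWeps}) and to track carefully how the constants $R_\e$, $C_{1,\e}$, $C_{2,\e}$ depend on the data. The key observation is that, although the structure functions $c_{jk}^l$ for the extended family $\{Y_j^\e\}$ blow up like negative powers of $\e$ as $\e\to0$ (see the Remark preceding the statement), the only way these functions enter the Nagel--Stein--Wainger argument is through the Taylor expansion of the exponential map $\Phi_{\e,v,x}$ along the flows of the $Y_i^\e$; and in that expansion the offending powers of $\e^{-1}$ are always paired with compensating powers of $\e$ coming from the rescaling $X_i^\e=\e^{d(i)-1}Y_i$ built into the very definition of $\Phi_{\e,v,x}$ and of the weighted cube $Q_\e(r)$. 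The first and main step, therefore, is to make this cancellation explicit: I would introduce the substitution $u_j\mapsto \e^{d(i_{\e j})-1}u_j$ (equivalently, pass from the $\e$-weighted cube to the $\e$-independent weighted cube associated to the degrees $d(i_{0j})$) and show that under this change of variables $\Phi_{\e,v,x}$ becomes a family of maps whose derivatives of all orders up to $r+1$ are bounded, uniformly in $\e\in[0,\bar\e]$, by a constant depending only on the $C^{r+1}$ norm of $X=(X_1,\dots,X_m)$ on a neighborhood of $K$. Concretely, each iterated Lie bracket of length $\le r$ of the original fields is a fixed smooth vector field, and the rescaled commutator $Y_i^\e=\e^{d(i)-1}Y_i$ absorbs precisely the weight needed so that, after the cube rescaling, everything is expressed in terms of the unrescaled $Y_i$ with $O(1)$ coefficients.

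Once this uniform $C^{r+1}$ control on the (rescaled) exponential maps is in hand, I would rerun the three ingredients of the proof of \cite[Theorem 7]{NSW} verbatim, checking at each appearance of a constant that it now depends only on these uniform bounds and on $K$: (a) the quantitative inverse function theorem giving injectivity of $\Phi_{\e,v,x}$ on a box of definite size, which yields $C_{1,\e}\ge C_1>0$ and $R_\e\ge R_0>0$; (b) the two-sided comparison of the Jacobian $|J\Phi_{\e,v,x}|$ with $|\lambda^\e_{I_\e}(x)|$ on that box, where the point is that the lower bound for $|J\Phi_{\e,v,x}|$ follows from $|\lambda^\e_{I_\e}(x)|$ being (up to the fixed constant $C_{2,\e}$ in \eqref{bestI}) the largest of the weighted determinants, so it survives the passage to the limit; and (c) the sandwiching $\Phi_{\e,v,x}(Q_\e(C_{1,\e}r))\subset B_\e(x,r)\subset\Phi_{\e,v,x}(Q_\e(C_{1,\e}r/C_{2,\e}))$, which is a formal consequence of (a), (b) and the definition of the control distance. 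Since at no point in this chain does the magnitude of the $c_{jk}^l$ enter except through the uniformly bounded rescaled objects, one may take $C_{2,\e}\ge C_2>0$ as well, and then replace the three sequences by their infima over $[\e,\bar\e]$ as indicated in the text to make them monotone, which does not destroy positivity.

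I would handle the $\e\to0$ limit by noting that the estimates of steps (a)--(c) are \emph{uniform in $\e\in(0,\bar\e]$} with constants independent of $\e$, and that the rescaled maps $\Phi_{\e,v,x}$ together with their first $r+1$ derivatives converge, as $\e\to0$, to those of the sub-Riemannian map $\Phi_{0,v,x}$ — the degree-$1$ fields $X_1,\dots,X_m$ are unchanged and the higher-weight fields $\e^{d(i)-1}Y_i$ tend to the correctly weighted limiting objects. Hence the $\e=0$ case inherits the same constants, and the bound is genuinely uniform on the closed interval $[0,\bar\e]$. The main obstacle I anticipate is purely bookkeeping but genuinely delicate: one must verify that \emph{every} constant in the NSW argument — in particular the radius of convergence of the Campbell--Hausdorff-type expansions used to show injectivity, and the implied constants in the comparison of $B_\e(x,r)$ with the image of the weighted cube — is controlled solely by finitely many derivatives of the \emph{original} fields, with the $\e$-dependence entering only through the explicit rescaling factors that have been absorbed. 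This requires re-examining the role of the commutator coefficients line by line in \cite{NSW}, and is where the proof will be technically heaviest; the rest is a routine, if lengthy, transcription.
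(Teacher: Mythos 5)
Your approach is genuinely different from the one in the paper, and while the high-level intuition (that the $\e^{-1}$ blow-up in the structure functions $c_{jk}^l$ is exactly offset by the $\e$-weights in the cube $Q_\e(r)$) is correct and is the real content of the matter, the route you propose -- rerunning the entire NSW proof of \cite[Theorem 7]{NSW} with careful tracking of constants -- is much heavier than what the paper actually does, and it has one real gap. The paper does \emph{not} re-enter the Nagel--Stein--Wainger machinery at all. Instead, it observes that the map $\Phi_{\e,v,x}$ can be written as a composition $\Phi_{\e,v,x}(u)=\Phi_{\e_0,F_{2,\e}(v),x}(F_{1,\e,v}(u))$ where $\e_0$ is one of the two \emph{endpoint} values $\e_0=0$ or $\e_0=\bar\e$ and $F_{1,\e,v}$, $F_{2,\e}$ are explicit affine, volume-preserving changes of variables. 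One then invokes Theorem \ref{MAINNSWeps} at $\e_0$ as a black box and only has to check that the boxes $Q_\e(C r)$ map into and out of the corresponding boxes at $\e_0$, which is elementary arithmetic with the degree weights. This converts the entire problem into two clean inclusions (the ``claims'' in the paper) rather than a constant chase through the inverse function theorem and Campbell--Hausdorff expansions.

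The gap in your plan is the treatment of the maximizing multi-index $I_\e$ from \eqref{bestI}. You write the substitution $u_j\mapsto\e^{d(i_{\e j})-1}u_j$ and then parenthetically say this is ``equivalently'' passing to the $\e$-independent weighted cube attached to $I_0$. That equivalence only holds when $I_\e=I_0$, which is true for $\e<r$ but fails for $r<\e$: in the latter regime the maximality condition \eqref{bestI} selects the $\e$-rescaled copies $\{\e^{d(i_{0j})-1}Y_{i_{0j}}\}$ rather than the unrescaled ones, and the degree function $d_\e$ reads these as degree $1$, not degree $d(i_{0j})$. So the map you need to compare against is $\Phi_{\bar\e,\,\cdot\,,x}$, not $\Phi_{0,\,\cdot\,,x}$, and the ``$\e$-independent weighted cube'' you name is the wrong one in this regime. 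The paper's two-case split ($\e<r<R_0$ vs.\ $r<\e<\bar\e$) is exactly what handles this jump in $I_\e$, and it is not a cosmetic detail: without it the single rescaling you propose does not produce a uniformly $C^{r+1}$-bounded family, because the degrees you divide by are discontinuous in $\e$ at $\e\sim r$. If you incorporate this dichotomy, your program should go through, though at considerably greater cost than the composition argument used in the paper.
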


\begin{proof} The proof is split in two cases: First we study the range $\e<r<R_0$ which  roughly corresponds
to the balls of radius $r$ having a sub-Riemannian shape. In this range we show that one can select
the constants $C_{i,\e}$ to be approximately $C_{i,0}$. The second case consists in the analysis of the range
$r<\e<\bar \e$. In this regime the balls are roughly of  Euclidean shape and we show that the constants
$C_{i,\e}$ can  be approximately chosen to be $C_{i,\bar \e}$.

 Let us fix $\e>0$, $R=R_0$ and $r<R_0$.
We can start by describing the family $I_\e$ defined in  (\ref{bestI}), which maximize $\lambda^\e_I(x)$.
We first note that for every $\e>0$ and for every $i$, $m+1 \leq i \leq p$
we have
  \begin{equation}\label{Yer}
 Y^\e_{i}r^{d_\e(i)} = \e^{d(i)-1} r Y_i, \quad Y^\e_{i+p-m}r^{d_\e(i+p-m)} =  r^d(i) Y_i.
  \end{equation}

Case 1: For every  $\e<r<R_0$ the indices $I_\e$ defined by the maximality condition (\ref{bestI}) coincide with indices of the family $I_0$ and do not depend on $\e$.
On the other hand
$$
\{Y_k: k\in J_\e\} = \{Y_{i_{0,k}}: i_{0,k}\in J_0\} \cup\{\e^{d(i_{0,k})-1} Y_{i_{0,k}}:  i_{0,k}\in I_0, i_{0,k} \geq m+1 \}  $$ $$  \cup\{\e^{d(i_{0,k})-1} Y_{i_{0,k}}:  i_{0,k}\in J_0 {, i_{0,k}>m} \}.$$
In correspondence with this decomposition of the set of indices we define a splitting in the
$v-$variables in \eqref{phi} as $$v=(\hat v, {\tilde v, \bar v}).$$
Consequently for every $\e<r$ the function  $\Phi_{\e, v, x}(u)$  reduces to
\begin{equation}\label{e0}
\Phi_{\e, v, x}(u)   =
exp\Big(\sum_{i_{\e j}\in I_\e} u_j Y^\e_{i_{\e j}} + \sum_{i_{\e k}\in J_\e} v_k Y^\e_{i_{\e k}} \Big)(x)= exp\Big(\sum_{i_{0 j}\in I_0} u_j Y^0_{i_{0 j}} + \sum_{i_{\e k}\in J_\e} v_k Y^\e_{i_{\e k}} \Big)(x)=\end{equation}
$$exp\Big(\sum_{i_{0 j}\in I_0} u_j Y^0_{i_{0 j}} + \sum_{i_{0 k}\in J_0} \hat v_k Y^0_{i_{0 k}} +
\sum_{i_{0 k}\in I_0, i >m } {\tilde v}_k \e^{d(i_{0 k})-1}  Y^0_{i_{0 k}}
+ \sum_{i_{0 k}\in J_0  {, i_{0,k}>m}  }{\bar v}_k \e^{d(i_{0 k})-1} Y^0_{i_{0 k}}
\Big)(x)=$$
$$=\Phi_{0, \hat v_k + \bar v_k \e^{d(i_{0 k})-1}, x}(u_1, \cdots u_m, u_{m+1} + {\tilde v}_{m+1} \e^{d(i_{0 m+1})-1}, \cdots, u_{n} + {\tilde v}_{n} \e^{d(i_{0 n})-1}).$$

Let us define mappings
$$F_{1,\e,v}(u)=\bigg(u_1,...,u_m, u_{m+1}+\tilde v_{m+1}\e^{d(i_{0  m+1})-1},...,u_{n} + {\tilde v}_{n} \e^{d(i_{0 n})-1}\bigg),$$
and
$$F_{2,\e}(v)=\bigg(\hat v_1 + \bar v_1 \e^{d( i_{01})-1 }, ..., \hat v_{2p-m} + \bar v_{2p-m} \e^{d( i_{0, 2p-m})-1}\bigg).$$
In view of \eqref{e0} we can write \begin{equation}\label{compos}
\Phi_{\e, v, x}(u) =\Phi_{0, F_{2,\e}(v), x}(F_{1,\e,v}(u)).
\end{equation}

Note that for any $\e\ge 0$ and for a fixed $v$, the mapping $u\to F_{1,\e,v}(u)$ is invertible and volume preserving in all $\R^n$. Moreover $J\Phi_{\e, v, x}(u) =J\Phi_{0, F_{2,\e}(v), x}(F_{1,\e,v}(u)).$
In view of \eqref{compos} and of Theorem \ref{MAINNSWeps}, as a function of $u$, the mapping $\Phi_{\e,v,x}(u)$ is defined,  invertible, and satisfies the Jacobian estimates in Theorem  \ref{MAINNSWeps} (ii)
$$\frac{1}{4} |\lambda^0_{I_0} (x)| \leq |J\Phi_{0, F_{2,\e}(v),x}(F_{1,\e,v}(u))|=|J\Phi_{\e, v,x}(u)| \leq 4 |\lambda^0_{I_0} (x)| $$ for all $u$ such that  $F_{1,\e,v}(u)\in Q_0(C_{1,0}r)$  and for $v$ such that
$$|F_{2,\e}^k(v)|=|\hat v_k + \bar v_k \e^{d(i_{0 k})-1}|\leq C_{2, 0}r^{d(i_{0 k})}, $$$$ |u_1|\leq  C_{1, 0}r^{d(i_{0 1})}   \cdots |u_m |\leq C_{1, 0} r^{d(i_{0 m})}, |u_{m+1} + {\tilde v}_{m+1} \e^{d(i_{0 m+1})-1}|\leq C_{1, 0} r^{d(i_{0 m+1})},$$
when $k=1,...,2p-m$.

The completion of the proof of Case 1 rests on the following two claims:

{\bf Claim 1} let  $\e<r<R_0$. There exists $C_6>0$, independent of $\e$, such that for all $v$ satisfying  $|v_k| \leq C_6r^{d(i_{\e k})}$ one has  $|F_{2,\e}^k(v)|=|\hat v_k + \bar v_k \e^{d(i_{0 k})-1}|\leq C_{2, 0}r^{d(i_{0 k})} .$

{\bf Proof of the claim:}
If we choose  $C_6< \min\{C_{1, 0}, C_{2, 0}\}$ and
$$|\hat  v_k|, |\tilde v_k|, |\bar v_k|\leq \min\{C_{1, 0}, C_{2, 0}\}\frac{r^ {d(i_{\e k})}}{4}, \quad |u_j|\leq C_{1, 0}\frac{r^{d(i_{\e j})}}{4},$$
it follows that
$$|\hat v_k|\leq C_{2, 0}\frac{r^ {d(i_{0 k})}}{4}, \quad   |\tilde v_k|, |\bar v_k|  \leq C_{1, 0}\frac{r}{4},\quad |u_j| \leq C_{1, 0}\frac{r^{d(i_{\e j})}}{4}.$$
So that
$$|\hat v_k|\leq C_{2, 0}\frac{r^ {d(i_{0 k})}}{4}, \quad \e^{d(i_{0 k})-1} |\tilde v_k|,  \quad \e^{d(i_{0 k})-1}|\bar v_k|  \leq C_{1, 0}\frac{r^{d(i_{0 k})} }{4},\quad |u_j| \leq C_{1, 0}\frac{r^{d(i_{0 j})}}{4},$$ completing the proof of the claim.

{\bf Claim 2} Let  $\e<r<R_0$ and $v$ fixed such that $|v_k| \leq C_6 r^{d(i_{\e k})}$ for $k=1,...,2p-m$. One has that $$ Q_\e(C_5^{-1} r)\subset F_{1,\e,v}^{-1}( Q_0(C_{1,0} r)) \subset  Q_\e(C_5 r)$$ for some constant $C_5>0$ independent of $\e\ge 0$.

{\bf Proof of the claim:} Choose $C_5$ sufficiently large so that $2\max \{ C_5^{-1}, C_6 \}\le C_{1,0}$ and observe that
if $u\in Q_\e(C_5^{-1} r)$ then for $k=1,...,m$ we have $|u_k|\le C_{1,0}r^{d(i_{\e,k})}=C_{1,0}r^{d(i_{0,k})}$
while for $k=m+1,...,n$ we have $|F^k_{1,\e,v}(u)|=|u_k+\tilde v_k \e^{d(i_{0k})-1}|\le
\max\{C_5^{-1}, C_6 \} r^{d(i_{0 k})} (1+\bar \e^{d(i_{0 k})-1})\le C_{1,0} r^{d(i_{0 k})}$. This proves the first inclusion in the claim. To establish the second inclusion we choose $C_5$ large enough so that $2(C_{1,0}+C_{2,\bar \e})\le C_5$ and observe that if $F_{1,\e,v}(u)\in  Q_0(C_{1,0} r)$ then
for $k=m+1,...,n$ one has $|u_k|\le |u_k+\tilde v_k \e^{d(i_{0k})-1}|+|\tilde v_k| \e^{d(i_{0k})-1}\le
2(C_{1,0}+C_{2,\bar \e}) r^{d(i_{0 k})}\le C_5 r^{d(i_{0 k})}$. The corresponding estimate for the range
$k=1,...,m$ is immediate.

In view of Claims 1 and 2, and of  Theorem \ref{MAINNSWeps}
It follows that for $\e<r$ and these choice of constants (independent of $\e$)\footnote{$R_0$ in place of $R_\e$,  $C_5$ in place of $C_{1,\e}$ and
$C_6$ in place of $C_{2,\e}$} the function $\Phi_{\e, v,x}(u) $ is invertible
on $Q_0(C_{1,0}r)$ and  i), ii) and iii) are satisfied.

\bigskip

Case 2: If  $r$ is  such that $r<\e<\bar \e$ we can apply \ref{Yer} and deduce that,
if $$(Y^0)_{ {i_{0,1}} \in I_0} =
\{Y_{i_{0,1}}, \cdots, Y_{i_{0,n}}\}$$
then the maximality condition  (\ref{bestI}) selects the following family of vector fields:
 $$(Y^\e)_{{i_{\e,1}} \in I_\e}=\{\e^{d(i_{0,1})-1} Y_{i_{0,1}}, \cdots, \e^{d(i_{0,n})-1} Y_{i_{0,n}} \}$$

The complementary family $J_\e$ becomes $$
\{Y^\e_{i_{\e k}}: i_{\e k} \in J_\e\} = \{\e^{d(i_{0,k})-1} Y_{i_{0,k}}: i_{0,k}\in J_0\} \cup \{ Y_{i_{0,k}}: i_{0,k}\in J_0\}\cup
\{Y^0_{i_{0,k}}: i_{0,k}\in I_0,  i_{0,k} \geq m+1\}$$
If we denote $A_\e$, $B_\e$ and $C_\e$ these three sets, and split the $v-$variable as $v=(\hat v, \tilde v,\hat v)$, then
it is clear that
$$Y\in A_\e \text{ iff }  \frac{{\bar \e}^{d(i_{0,k})-1} }{\e^{d(i_{0,k})-1} }Y\in A_{\bar \e}, $$ and in this case the values of $d_\e$ and $d_{\bar\e}$ are the same on the corresponding indices. Analogously $$ Y\in B_\e \text{ iff } Y\in B_{\bar \e}, \quad Y\in C_\e \text{ iff }  Y\in C_{\bar \e},$$
and the degrees are the same.

Consequently in this case, for every $\e>r$ the function  $\Phi_{\e, v, x}(u)$  reduces to
$$\Phi_{\e, v, x}(u)   =
exp\Big(\sum_{i_{\e j}\in I_\e} u_j Y^\e_{i_{\e j}} + \sum_{i_{\e k}\in J_\e} v_k Y^\e_{i_{\e k}} \Big)(x)= exp\Big(\sum_{i_{0 j}\in I_0} u_j \e^{d(i_{0,k})-1} Y^0_{i_{0 j}} + \sum_{i_{\e k}\in J_\e} v_k Y^\e_{i_{\e k}} \Big)(x)=$$
$$exp\Big(\sum_{i_{0 j}\in I_0} u_j \frac{{ \e}^{d(i_{0,k})-1} }{\bar \e^{d(i_{0,k})-1} }Y^{\bar \e}_{i_{0 j}} + \sum_{i_{0 k}\in J_0} \hat v_k Y^0_{i_{0 k}} +
\sum_{i_{0 k}\in I_0, i >m } {\tilde v}_k Y^0_{i_{0 k}}
+ \sum_{i_{0 k}\in J_0}{\bar v}_k   \frac{{ \e}^{d(i_{0,k})-1} }{\bar \e^{d(i_{0,k})-1} }Y^{\bar \e}_{i_{0 k}}
\Big)(x)$$

This function is defined and invertible for
$$|\hat v_k |, \ | \tilde v_k| ,\ |{\bar v}_k  | \frac{{ \e}^{d(i_{0,k})-1} }{\bar \e^{d(i_{0,k})-1} }\leq C_{2, \bar \e}r^{d_{\bar \e}(i_{\bar \e k})},  |u_j |\frac{{ \e}^{d(i_{0,j})-1} }{\bar \e^{d(i_{0,j})-1} }\leq C_{1, \bar \e} r^{d_{\bar \e}(i_{\bar \e j})}.$$
 Recall that with the present choice of $r<\e<\bar \e$, we have $C_{1, \bar \e} r^{d_{\bar \e}(i_{\bar \e j})}=C_{1, \bar \e} r^{d_{ \e}(i_{\bar \e j})}=C_{1, \bar \e} r^{d_{ \e}(i_{\e j})}$.
If we set
$$|\hat v_k|,| \tilde v_k|,  |\bar v_k |\leq C_{2, \bar \e} r^{d_{\bar \e(i_{\bar \e k})}},
$$$$|u_j|\leq C_{1, \bar \e} r^{d_{\bar \e}  (i_{\bar \e j})},$$

and argue similarly to Case 1, then the function $\Phi_{\e, v, x}$ will satisfy conditions i), ii), and iii) on $Q(C_{1,\bar \e}r)$  and hence on $Q(C_{1,\e}r)$, with constants independent of $\e$. \end{proof}

\bigskip

We now turn our attention to the Poincar\'{e} inequality and prove that it holds
 with  constant independent of $\epsilon$.  Our argument rests on a rather direct proof from \cite{LanMor} which in some respects simplifies the method used by Jerison in \cite{jer:poincare}. Using some Jacobian estimates from
  \cite{GarofaloNhieu:lip1998} or  \cite{frss:embedding} we will establish that the assumptions required in the key result \cite[Theorem 2.1]{LanMor} are satisfied independently from $\e\ge 0$.  We start by recalling

\begin{thrm}\cite[Theorem 2.1]{LanMor} \label{LM}
 Assume that the doubling condition $(D)$ is satisfied and there exist a sphere $B_\epsilon(x_0, r)$, a cube $Q_\epsilon \subset \R^n$
and a map $E: B_\e(x_0, r)\times Q_\epsilon \rightarrow \R^n$ satisfying the following conditions:
\begin{itemize}
\item [i)]    $B_\e(x_0, 2 r)\subset E(x, Q_\epsilon)$ \quad for every $x\in B_\e(x_0, r)$
\item[ii)]the function $u \mapsto E(x, u)$ is one to one on the box $ Q_\epsilon$ as a function of the variable $u$ and there exists a constant  $\alpha_1>0$ such that $$\frac{1}{\alpha_1} |JE(x,0)| \leq |JE(x,u)| \leq \alpha_1 |JE(x,0)| \quad  \text{ for every  } u \in Q_\epsilon$$
\end{itemize}
Also assume that there exists a positive constant $\alpha_2$, and a function $\gamma:  B_\e(x_0, r) \times Q_\epsilon \times [0,\alpha_2 r]\rightarrow \R^n$
satisfying the following conditions
\begin{itemize}
 \item[iii)]  For every $(x,u) \in  B_\e(x_0, r) \times Q_\epsilon$ the function
$t \mapsto \gamma(x,u,t)$ is a subunit path connecting $x$ and $E(x,u)$
 \item[iv)] For every $(h,t) \in  B_\e(x_0, r) \times Q_\epsilon$ the function $x \mapsto \gamma(x,u,t)$ is a one-to-one map and there exists a constant $\alpha_3>0$ such that
     $$\inf_{ B_\e(x_0, r)\times Q_\epsilon} \Big|det \frac{\partial \gamma}{\partial x}\Big|\geq\alpha_3$$
\end{itemize}
Then there exists a constant $C_P$ depending only on the constants $\alpha_1, \alpha_2, \alpha_3$ and the doubling constant $C_D$ such that (P) is satisfied.
\end{thrm}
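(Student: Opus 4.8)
The plan is to prove Theorem \ref{LM} by the representation--formula argument of Lanconelli and Morbidelli: one converts the oscillation of a function $v\in W^{1,2}$ on the ball $B_\e(x_0,r)$ into a line integral of $|Xv|$ along the subunit paths $\gamma(x,u,\cdot)$, averages first in the endpoint parameter $u\in Q_\e$ by means of the covering map $E$, then in the base point $x$ by a change of variables, and finally invokes the self--improvement of a weak $(1,1)$--Poincar\'e inequality on a doubling length space. (To avoid a clash with the variable $u$ of the cube $Q_\e$ we write $v$ for the function to which \eqref{Poincare} is applied, and $v_B=|B|^{-1}\int_B v$.) It suffices to prove the inequality for $v\in C^\infty$ and then pass to $W^{1,2}$ by density; throughout, the only delicate point is to check that every comparison constant depends on $\al_1,\al_2,\al_3$ and $C_D$ alone.

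\emph{Line integral and averaging over the endpoint.} For $v\in C^\infty$, $x\in B_\e(x_0,r)$ and $u\in Q_\e$ write $v(E(x,u))-v(x)=\int_0^{\al_2 r}\tfrac{d}{dt}v(\gamma(x,u,t))\,dt$; since $t\mapsto\gamma(x,u,t)$ is subunit by iii), i.e.\ $\dot\gamma=\sum a_iX_i(\gamma)$ with $\sum a_i^2\le1$, one has $\big|\tfrac{d}{dt}v(\gamma(x,u,t))\big|\le|Xv(\gamma(x,u,t))|$, whence
$$|v(E(x,u))-v(x)|\le\int_0^{\al_2 r}|Xv(\gamma(x,u,t))|\,dt.$$
Because $\gamma(x,u,\cdot)$ is a subunit path of length $\le\al_2 r$ starting at $x\in B_\e(x_0,r)$, both $E(x,Q_\e)$ and the sets $\gamma(B_\e(x_0,r),u,t)$ lie in $B_\e(x_0,(1+\al_2)r)$, whose measure is at most $C(C_D,\al_2)|B_\e(x_0,2r)|$ by finitely many applications of \eqref{doubling}. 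Combining $B_\e(x_0,2r)\subset E(x,Q_\e)$ with ii) gives $|B_\e(x_0,2r)|\le\al_1|JE(x,0)|\,|Q_\e|$ and $|JE(x,0)|\,|Q_\e|\le\al_1|E(x,Q_\e)|\le C(C_D,\al_1,\al_2)|B_\e(x_0,2r)|$, so $|JE(x,0)|$ is comparable to $|B_\e(x_0,2r)|/|Q_\e|$ uniformly in $x\in B_\e(x_0,r)$. Using Jensen's inequality and the change of variables $y=E(x,u)$ (one--to--one on $Q_\e$ by ii)), for $x\in B_\e(x_0,r)$ one obtains
$$|v(x)-v_{B_\e(x_0,2r)}|\le\frac{1}{|B_\e(x_0,2r)|}\int_{B_\e(x_0,2r)}|v(x)-v(y)|\,dy\le\frac{\al_1|JE(x,0)|}{|B_\e(x_0,2r)|}\int_{Q_\e}\int_0^{\al_2 r}|Xv(\gamma(x,u,t))|\,dt\,du,$$
and the prefactor is $\le C(\al_1,\al_2,C_D)\,|Q_\e|^{-1}$.

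\emph{Averaging over the base point and self--improvement.} Integrate the last bound over $x\in B_\e(x_0,r)$, use Fubini, and for each fixed $(u,t)$ change variables $z=\gamma(x,u,t)$: by iv) this map is one--to--one on $B_\e(x_0,r)$ with $|\det\p\gamma/\p x|\ge\al_3$ and image inside $B_\e(x_0,(1+\al_2)r)$, so $\int_{B_\e(x_0,r)}|Xv(\gamma(x,u,t))|\,dx\le\al_3^{-1}\int_{B_\e(x_0,(1+\al_2)r)}|Xv|$. Since $\int_{Q_\e}\int_0^{\al_2 r}dt\,du=\al_2 r|Q_\e|$, the factor $|Q_\e|$ cancels and one arrives at $\int_{B_\e(x_0,r)}|v-v_{B_\e(x_0,2r)}|\le C\al_3^{-1}r\int_{B_\e(x_0,(1+\al_2)r)}|Xv|$; dividing by $|B_\e(x_0,r)|$, applying \eqref{doubling} once more, and replacing $v_{B_\e(x_0,2r)}$ by $v_{B_\e(x_0,r)}$ (at the cost of a factor $2$) yields a weak $(1,1)$--Poincar\'e inequality
$$\frac{1}{|B_\e(x_0,r)|}\int_{B_\e(x_0,r)}|v-v_{B_\e(x_0,r)}|\,dx\le C_0\,r\,\frac{1}{|B_\e(x_0,\lambda r)|}\int_{B_\e(x_0,\lambda r)}|Xv|\,dz,\qquad\lambda=1+\al_2,$$
with $C_0=C_0(\al_1,\al_2,\al_3,C_D)$. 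On a doubling length space such a weak $(1,1)$--Poincar\'e inequality self--improves (Haj\l asz--Koskela; see also \cite{LanMor}, \cite{GarofaloNhieu:lip1998}, \cite{frss:embedding}) to the strong $(2,2)$ inequality \eqref{Poincare}, with dilation $1$ or $2$ and a constant $C_P$ depending only on $C_0$ and $C_D$, hence only on $\al_1,\al_2,\al_3,C_D$; this is the asserted conclusion. (Equivalently, the computation above can be phrased as a pointwise Riesz--potential bound $|v(x)-v_B|\lesssim\int_{\lambda B}d_\e(x,y)\,|B_\e(x,d_\e(x,y))|^{-1}|Xv(y)|\,dy$, from which \eqref{Poincare}---indeed the $(q,q)$ inequality for every $q\ge1$---follows from the $L^q$ boundedness of the fractional integral operator on the homogeneous space.)

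The main obstacle is precisely the bookkeeping just outlined: one must be sure that the subunit maps $E(x,\cdot)$ and $\gamma(\cdot,u,t)$ only enlarge balls from radius $r$ to radius $(1+\al_2)r$, so that \eqref{doubling} absorbs the enlargement with a constant controlled by $\al_2$ and $C_D$, and that $|JE(x,0)|$, $|Q_\e|$ and $|B_\e(x_0,2r)|$ are mutually comparable uniformly in $x\in B_\e(x_0,r)$, which is what makes the volume factor $|Q_\e|$ disappear in the last step; once this is in place the self--improvement is a black box. In the application of this note the theorem will be invoked with $E$ and $\gamma$ manufactured from the exponential maps $\Phi_{\e,v,x}$ of Theorem \ref{MAINNSWeps}: Proposition \ref{indipendenza} guarantees that the constants $C_{1,\e},C_{2,\e},R_\e$ may be chosen independent of $\e$, and together with the Jacobian lower bounds of \cite{GarofaloNhieu:lip1998} or \cite{frss:embedding} this makes $\al_1,\al_2,\al_3$---and therefore $C_P$---independent of $\e$.
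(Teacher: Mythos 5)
The paper does not prove Theorem~\ref{LM}: it is quoted verbatim (and used as a black box) from Lanconelli--Morbidelli \cite{LanMor}, so there is no in-paper proof to compare against. Evaluated on its own terms, your reconstruction is essentially the representation-formula argument that \cite{LanMor} actually uses, and the reasoning is sound. The chain ``line integral along the subunit path $\gamma(x,u,\cdot)$, change of variables $y=E(x,u)$ using i)--ii), comparability $|JE(x,0)|\asymp|B_\e(x_0,2r)|/|Q_\e|$, integration in $x$ with the change of variables $z=\gamma(x,u,t)$ and the Jacobian lower bound from iv), then Haj\l asz--Koskela self-improvement'' is exactly the standard route, and all constants are tracked correctly in terms of $\al_1,\al_2,\al_3,C_D$. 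Two minor points are worth making explicit if this were to serve as an actual proof: (a) the self-improvement step is doing nontrivial work --- one goes from a weak $(1,1)$ inequality with dilation $\lambda=1+\al_2$ to the weak $(2,2)$ inequality \eqref{Poincare} with dilation $2$, which requires first upgrading the exponent on the left (e.g.\ by H\"older to a weak $(1,2)$ and then Haj\l asz--Koskela to a weak $(2N/(N-2),2)$), and then a Boman/Jerison chaining argument to shrink the dilation back down; both steps use only doubling and the length-space structure, so the constant dependence claimed is correct, but the two mechanisms should be separated; and (b) the change of variables $\int_{B_\e(x_0,2r)}|v(x)-v(y)|\,dy=\int_{Q_\e}|v(x)-v(E(x,u))||JE(x,u)|\,du$ uses the injectivity of $u\mapsto E(x,u)$ from ii) --- you invoke ii) for the Jacobian bound but the injectivity is equally essential there and deserves mention. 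Neither point is a gap; they are just places where a referee would ask you to expand.
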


\begin{prop}
The vector fields $(X^\e_i)_{i=1\cdots p}$ satisfy condition (P) with a constant independent of $\e$.
\end{prop}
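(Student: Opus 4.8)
The plan is to verify the four hypotheses of Theorem \ref{LM} with the map $E$, the cube $Q_\e$ and the path $\gamma$ chosen so that the constants $C_D,\alpha_1,\alpha_2,\alpha_3$ are all independent of $\e$; since the Poincar\'e constant produced by Theorem \ref{LM} depends only on these, this gives (P) uniformly in $\e$. I would first record that (D) itself is uniform: by Theorem \ref{balmeasure} together with Proposition \ref{indipendenza} the radius $R_\e$ and the constants $C_{3,\e},C_{4,\e}$ in \eqref{nsw} can be taken $\e$-independent, and then $|B_\e(x,2r)|\le (C_{4,\e}/C_{3,\e})\sum_I|\lambda^\e_I(x)|(2r)^{d(I)}\le C\,|B_\e(x,r)|$ with $C$ equal to $C_{4,\e}/C_{3,\e}$ times a fixed power of $2$. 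Fix $x_0$ and $0<r<R$, with $R$ the $\e$-uniform radius of Proposition \ref{indipendenza} (shrunk by the fixed factor needed for the covering step below), and let $I_\e$ realise the maximality \eqref{bestI} at scale $r$.

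For the map I would take the Nagel--Stein--Wainger exponential \eqref{phi} with the $J_\e$-variables set to zero, $E(x,u)=\Phi_{\e,0,x}(u)$, and let $Q_\e$ be a suitable dilate $Q_\e(cr)$ of the weighted box. Condition i) ($B_\e(x_0,2r)\subset E(x,Q_\e)$ for $x\in B_\e(x_0,r)$) then follows from $B_\e(x_0,2r)\subset B_\e(x,3r)$ and the inclusion in Theorem \ref{MAINNSWeps}(iii), after the standard bookkeeping of scales carried out in \cite{NSW, LanMor}; condition ii) is precisely the two-sided bound $\tfrac14|\lambda^\e_{I_\e}(x)|\le|JE(x,u)|\le 4|\lambda^\e_{I_\e}(x)|$ of Theorem \ref{MAINNSWeps}(ii), so $\alpha_1=16$. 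The only thing to check is that the dilation factor $c$ defining $Q_\e$ and the constant $\alpha_1$ do not degenerate as $\e\to 0$, and this is exactly the content of Proposition \ref{indipendenza}, which makes $R_\e,C_{1,\e},C_{2,\e}$ $\e$-independent.

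For the connecting path I would follow \cite{GarofaloNhieu:lip1998, frss:embedding}, which produce a subunit curve $t\mapsto\gamma(x,u,t)$ joining $x$ to $E(x,u)$ by concatenating integral curves of the generators $X^\e_1,\dots,X^\e_p$ together with, for every higher--degree commutator direction occurring in $\Phi_{\e,0,x}$, a commutator curve --- a word of combinatorially bounded length in the flows of $X^\e_1,\dots,X^\e_m=X_1,\dots,X_m$. Two facts from the proof of Proposition \ref{indipendenza} make this construction uniform in $\e$. In the range $\e<r$ (Case 1) one has $I_\e=I_0$ and, since the $J_\e$-variables are zero, the identity \eqref{e0}--\eqref{compos} collapses to $\Phi_{\e,0,x}=\Phi_{0,0,x}$ on $Q_\e=Q_0(cr)$, so $E,Q_\e$ and $\gamma$ are literally the $\e=0$ objects and inherit the $\e=0$ constants. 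In the range $r<\e$ (Case 2) the family $\{Y^\e_{i_{\e j}}:i_{\e j}\in I_\e\}$ consists entirely of vector fields among $X^\e_1,\dots,X^\e_p$, all of degree one, so $\gamma$ is just a concatenation of integral curves of the generators --- manifestly subunit --- and the relation to the $\bar\e$-frame in Case 2 of Proposition \ref{indipendenza} identifies its constants with the $\e=\bar\e$ ones. Given this, condition iii) ($\gamma$ subunit, of length $\le\alpha_2 r$) follows because the box $Q_\e$ bounds the coordinate dual to a degree-$d$ direction by $(cr)^d$, so that commutator segment has subunit length $\le C_d\,r$, and there are only combinatorially many segments; condition iv) follows because $x\mapsto\gamma(x,u,t)$ is a composition of flow maps of the fields $X_1,\dots,X_m$ and $X^\e_i=\e^{d(i)-1}Y_i$ (with $\e^{d(i)-1}\le 1$) over times $\le C r$, hence invertible with $|\det\partial\gamma/\partial x|\in[e^{-C'},e^{C'}]$ for a $C'$ depending only on $\max_i\|Y_i\|_{C^1}$, and the quantitative lower bound $\alpha_3=\inf|\det\partial\gamma/\partial x|>0$ is the one supplied by \cite{GarofaloNhieu:lip1998, frss:embedding} in the $\e=0$ and $\e=\bar\e$ models.

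The step I expect to be the real obstacle is condition iv): estimating the Jacobian of $x\mapsto\gamma(x,u,t)$ naively brings in the structure functions $c^l_{jk}$ of the $\e$-family, which blow up as $\e\to 0$ --- the difficulty already flagged in the Remark after their definition. The way around it is the routing just described: the path is built only from the \emph{fixed} horizontal fields $X_1,\dots,X_m$ and from the rescalings $X^\e_i=\e^{d(i)-1}Y_i$, whose sole effect is to shrink displacements and divergences, together with the reductions to the models $\e=0$ and $\e=\bar\e$ provided by \eqref{compos} and by Case 2 of Proposition \ref{indipendenza}. Once $C_D,\alpha_1,\alpha_2,\alpha_3$ are known to be $\e$-independent, Theorem \ref{LM} delivers (P) with an $\e$-independent constant.
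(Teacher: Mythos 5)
Your proposal is correct and follows essentially the same route as the paper: choose $E(x,u)=\Phi_{\e,0,x}(u)$ and $Q_\e$ a fixed dilate of the weighted box, verify hypotheses (i)--(ii) of Theorem \ref{LM} directly from Theorem \ref{MAINNSWeps} with $\alpha_1=16$, and obtain (iii)--(iv) from the subunit path of Chow's theorem together with the Jacobian estimate $|\det\partial\gamma/\partial x|=1+\psi$, $|\psi|\le cr$, of \cite{GarofaloNhieu:lip1998,frss:embedding}, whose constant depends only on the Lipschitz norms of the degree-one fields and is therefore $\e$-uniform because $\e^{d(i)-1}\le 1$. The extra scaffolding you supply --- the observation that (D) is itself $\e$-uniform via Theorem \ref{balmeasure}, the explicit reduction of the path to the $\e=0$ and $\e=\bar\e$ models through the Case~1/Case~2 dichotomy of Proposition \ref{indipendenza}, and the identification of the unbounded structure functions $c^l_{jk}$ as the danger one must route around --- is elaboration of what the paper either states in passing or leaves implicit, rather than a different argument.
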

\begin{proof}
We use Proposition \ref{indipendenza} to  show that the assumptions of Theorem \ref{LM} are satisfied unformly in $\e$.
Fix a value $\e\ge 0$ and a set $K=B_\epsilon(x_0, r)$. Choose the constants $C_i$
as in Proposition \ref{indipendenza} and Theorem \ref{MAINNSWeps}, let $Q_\epsilon=Q_\e(\frac{3 C_{1}}{C_2} r)$ and set
$$E(x,u)= \Phi_{\epsilon, 0,x}(u), \text{ defined on }
K \times Q_\epsilon\rightarrow \R^n.$$

To establish assumption (i) of Theorem \ref{LM} it suffices to note that by virtue of  condition (iii) in Theorem  \ref{MAINNSWeps} one has that for $x\in B_\epsilon(x_0, r)$, $$B_\e(x_0, 2 r)\subset B_\e(x, 3 r)\subset E(x, Q_\epsilon).$$
 Assumption (ii) in Theorem \ref{LM} is a direct consequence of condition (ii) in Theorem  \ref{MAINNSWeps}, with $\alpha_1= 16$. By the classical connectivity result of Chow   we see that $E(x,u)$ satisfies assumption (iii), with a function $\gamma$, piecewise expressed as exponential mappings of vector fields of $\e-$degree one. Let us denote   $(X^\e_i)_{i\in I_\e}$ the required vector fields. With this choice of path, it is known (see for example \cite[Lemma 2.2]{GarofaloNhieu:lip1998} or  \cite[pp 99-101]{frss:embedding}) that $x\rightarrow \gamma(x,u,t)$ is a $C^1$ path, with Jacobian determinant
$$\bigg|det \frac{\partial \gamma}{\partial x}(x,u,t)\bigg|= 1 + \psi(x,u,t),$$
for a suitable function $\psi(x,u,t)$
satisfying $$|\psi(x,u,t)|\leq cr, \text{  on  }K\times Q_\e \times [0,cr].$$
Moreover the constant $c$ only depends on the Lipschitz constant of the vector fields $(X^\e_i)_{i\in I_\e}$. Hence in our setting it can be chosen independently of $\e$. Consequently also condition (iv) is satisfied.

\end{proof}

\bibliographystyle{acm}
\bibliography{papers}
\end{document}